\def\proof{\@ifnextchar[{\@oproof}{\@nproof}}
\def\@oproof[#1][#2]{\trivlist\item[\hskip\labelsep
\textit{#2 Proof of\ #1.}~]\ignorespaces}
\def\@nproof{\trivlist\item[\hskip\labelsep\textit{Proof.}~]\ignorespaces}
\newdimen\plusheight
\def\+{\;\lower\plusheight\hbox{$+$}\;}
\newdimen\minusheight
\def\-{\;\lower\minusheight\hbox{$-$}\;}
\newdimen\cdotsheight
\def\cds{\lower\cdotsheight\hbox{$\cdots$}}
\DeclareMathOperator{\Hom}{Hom}
\DeclareMathOperator{\Proj}{Proj}
\DeclareMathOperator{\Spec}{Spec}
\DeclareMathOperator{\Image}{Image}
\DeclareMathOperator{\Aut}{Aut}
\DeclareMathOperator{\Ann}{Ann}
\DeclareMathOperator{\Supp}{Supp}
\newcommand{\FF}{\mathbb{F}}
\newcommand{\PP}{\mathbb{P}}
\newcommand{\ZZ}{\mathbb{Z}}
\newcommand{\QQ}{\mathbb{Q}}
\newcommand{\CC}{\mathbb{C}}
\newcommand{\NN}{\mathbb{N}}
\newcommand{\mm}{\mathfrak{m}}
\newcommand{\pp}{\mathfrak{p}}
\newcommand{\supp}{\mathrm{supp}}
\newcommand{\init}{\mathrm{in}}
  \tikzset{mylabel/.style  args={at #1 #2  with #3}{
    postaction={decorate,
    decoration={
      markings,
      mark= at position #1
      with  \node [#2] {#3};
 } } } }
\numberwithin{equation}{section}
\newtheorem{theorem}{Theorem}[section]
\newtheorem{question}[theorem]{Question}
\newtheorem{cor}[theorem]{Corollary}
\newtheorem{proposition}[theorem]{Proposition}
\newtheorem{remark}[theorem]{Remark}
\newtheorem{definition}[theorem]{Definition}
 \newtheorem{example}[theorem]{Example}
\title{Herzog ideals and $F$-singularities}
\author[]{Alessandro De Stefani}
\address{Dipartimento di Matematica, Universit\'a di Genova, Italy}
\email{alessandro.destefani@unige.it}
\author[]{Linquan Ma}
\address{Math Department, Purdue University, USA}
\email{ma326@purdue.edu}
\author[]{Matteo Varbaro}
\address{Dipartimento di Matematica, Universit\'a di Genova, Italy}
\email{matteo.varbaro@unige.it}
\begin{document}
\maketitle

\begin{abstract}
In this paper we study the connection between Herzog ideals (i.e., ideals with a squarefree Gr\"obner degeneration) and $F$-singularities. More precisely, we show that, in positive characteristic, homogeneous Herzog ideals define $F$-anti-nilpotent rings, and we inquire, in characteristic 0, on a surprising relationship between being Herzog ideals after a change of coordinates and defining rings of dense {\it open} $F$-pure type.
\end{abstract}

\section{Introduction}
Let $S=k[X_0,\dots,X_n]$ be a polynomial ring over a field $k$. Following \cite{HTVW}, an ideal $I\subseteq S$ is called a {\it Herzog ideal} if there exists a monomial order $<$ (i.e., a total order on the monomials of $S$ such that $1<v$ and $u<v\implies uw<vw$ where $u,v,w$ are arbitrary monomials of $S$ with $v\neq 1$) such that $\init_<(I)$ is squarefree. The class of Herzog ideals is largely populated:  It includes ideals defining Algebras with Straightening Law, Knutson ideals, Cartwright-Sturmfels ideals, 
and many more. 
The name comes from Herzog's conjecture, resolved in \cite{CoVa}, saying that the connection between $I$ and $\init_<(I)$ is much tighter than usual if $\init_<(I)$ is squarefree. 


The first purpose of this paper is to study the connection between Herzog ideals and $F$-singularities; this has already been investigated in \cite{KoVa}, where it was proved, for example, that in positive characteristic Herzog ideals define $F$-full and $F$-injective rings, see also \cite{G-M} for the latter result. The proofs of these results are based on ideas in \cite{Ma14}, where the stronger notion of $F$-anti-nilpotent has been introduced. An obstacle to proving $F$-anti-nilpotency of quotients by Herzog ideals is that it is not known whether this property localizes, see Remark \ref{Erratum}. In Section 2 we will prove that {\it homogeneous} (not necessarily with respect to a standard grading) Herzog ideals $I\subseteq S$ define $F$-anti-nilpotent rings.

\begin{theorem}[{Theorem~\ref{t:HF-anti}}]
Let $R$ be an $\NN$-graded algebra over an $F$-finite field $k$ of prime characteristic $p$ with homogeneous maximal ideal $\mm$. Write $R=S/I$ where $S$ is an $\NN$-graded polynomial ring over $k$. If $I$ is a Herzog ideal, then $R_{\mm}$ is $F$-anti-nilpotent.
\end{theorem}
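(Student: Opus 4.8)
\emph{The plan.} I would deduce the theorem by transporting $F$-anti-nilpotency from a squarefree monomial ring along a Gr\"obner degeneration; the point that makes this work is that, by Herzog's conjecture (resolved in \cite{CoVa}), the degeneration preserves the graded Hilbert function of every local cohomology module, so it is cohomologically flat. Since $I$ is homogeneous there is a single maximal ideal to worry about, which is why the non-localization of $F$-anti-nilpotency is not an issue here. Concretely: fix a monomial order $<$ with $J:=\init_<(I)$ squarefree and a weight $w\in\NN^{n+1}$ with $\init_w(I)=J$; homogenizing $I$ with respect to $w$ and an auxiliary variable $t$ gives an ideal $\widetilde I\subseteq S[t]$ that is homogeneous for the grading with $\deg(X_i)$ as in $S$ and $\deg(t)=0$, is flat over $A:=k[t]$, and satisfies, with $R':=S[t]/\widetilde I$,
$$R'/tR'\cong S/J,\qquad R'/(t-a)R'\cong S/I\ \ (0\neq a),\qquad R'[t^{-1}]\cong (S/I)\otimes_k k[t,t^{-1}].$$
The Frobenius of $S[t]$ (with $X_i\mapsto X_i^p$, $t\mapsto t^p$) restricts to a Frobenius $F$ on $R'$, semilinear over the Frobenius of $A$, which descends along each of these three maps to the natural Frobenius of $S/J$, of $S/I$, and of $(S/I)\otimes_k k[t,t^{-1}]$.

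\emph{Cohomological flatness.} Write $\mm=(X_0,\dots,X_n)$. Each $H^i_{\mm}(R')_j$ is a finitely generated $A$-module and $H^i_{\mm}(R')$ is bounded above in $j$. By \cite{CoVa}, $\dim_k H^i_{\mm}(S/I)_j=\dim_k H^i_{\mm}(S/J)_j$ for all $i,j$. Comparing, via the structure theorem over the PID $A$, the generic fibre $H^i_{\mm}(R')_j\otimes_A k(t)\cong H^i_{\mm}(S/I)_j\otimes_k k(t)$ with the closed fibres (for a uniformizer $g$ at a closed point, the sequence $0\to R'\xrightarrow{g}R'\to R'/gR'\to 0$ embeds $H^i_{\mm}(R')_j\otimes_A A/(g)$ into $H^i_{\mm}(R'/gR')_j$, whose dimension over the residue field is $\dim_k H^i_{\mm}(S/I)_j$ when $g$ is not associated to $t$ and is $\le\dim_k H^i_{\mm}(S/J)_j=\dim_k H^i_{\mm}(S/I)_j$ when it is) forces $H^i_{\mm}(R')_j$ to be free over $A$. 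In particular every $H^i_{\mm}(R')$ is $A$-torsion-free, so the three base changes yield $F$-equivariant identifications $H^i_{\mm}(R')/tH^i_{\mm}(R')\cong H^i_{\mm}(S/J)$ and $H^i_{\mm}(R')/(t-1)H^i_{\mm}(R')\cong H^i_{\mm}(S/I)$. (This cohomological flatness is the same mechanism already exploited in \cite{KoVa}.)

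\emph{Transfer.} The Stanley--Reisner ring $S/J$ is $F$-anti-nilpotent (well known; e.g., its local cohomology is assembled by Mayer--Vietoris from local cohomology of polynomial quotients by monomial primes, which are regular, and $F$-anti-nilpotency is stable under $F$-compatible subobjects, quotients, and extensions). Suppose $R_{\mm}=(S/I)_{\mm}$ were not $F$-anti-nilpotent; since for a graded ring this may be tested on graded $F$-stable submodules, there would be an $i$, a graded $F$-stable submodule $N\subsetneq H^i_{\mm}(S/I)=M/(t-1)M$ (put $M:=H^i_{\mm}(R')$), and an $e$ with $F^e=0$ on $Q:=(M/(t-1)M)/N\neq 0$. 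Spreading $N$ to the constant family $\mathcal N:=N\otimes_k k[t,t^{-1}]\subseteq M[t^{-1}]$ — graded, $F$-stable, with $F^e=0$ on $Q\otimes_k k[t,t^{-1}]$ — and letting $\mathcal N^{\mathrm{sat}}\subseteq M$ be the $t$-saturation of $\mathcal N\cap M$, one checks that $\mathcal N^{\mathrm{sat}}$ is a graded $F$-stable submodule, that $M/\mathcal N^{\mathrm{sat}}$ is nonzero, $t$-torsion-free, and embeds in $Q\otimes_k k[t,t^{-1}]$ (hence $F^e=0$ on it), and that, being graded, bounded above, and $t$-torsion-free with finitely generated graded pieces, $M/\mathcal N^{\mathrm{sat}}$ does not vanish modulo $t$. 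Therefore the image of $\mathcal N^{\mathrm{sat}}$ in $M/tM\cong H^i_{\mm}(S/J)$ is a graded $F$-stable submodule with nonzero quotient killed by $F^e$, contradicting the $F$-anti-nilpotency of $S/J$; hence $R_{\mm}$ is $F$-anti-nilpotent.

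\emph{Main obstacle.} The genuinely hard input is the cohomological flatness of the degeneration — essentially the local-cohomology form of Herzog's conjecture from \cite{CoVa} — since upper semicontinuity on its own gives only the wrong-way inequality. Everything else is careful bookkeeping: that the $t\mapsto t^p$ Frobenius of $S[t]$ descends correctly to each fibre, that the comparison of \cite{CoVa} is available for the possibly non-standard $\NN$-grading, and that $F$-anti-nilpotency of the graded-local ring $R_{\mm}$ can be detected on graded $F$-stable submodules of its local cohomology.
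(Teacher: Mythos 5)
Your route is genuinely different from the paper's. The paper never invokes the cohomological flatness of the Gr\"obner degeneration (the Conca--Varbaro/Herzog-conjecture input \cite{CoVa}); instead it starts from $S/\init(I)$ being $F$-pure, hence $F$-anti-nilpotent at $\mm$ by \cite[Thm.~3.8]{Ma14}, deforms this \emph{up} the family using \cite[Thm.~4.2]{MaQuyDeformation} to conclude that $S[t]/\widetilde I$ is $F$-anti-nilpotent at the graded maximal ideal, converts to Cartier-anti-nilpotency (which localizes), spreads over all of $\Spec S[t]/\widetilde I$ via Proposition~\ref{GradedCartier}, passes to the generic fibre $R\otimes_k k(t)$, and finally descends along $k\to k(t)$. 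Your argument goes in the opposite direction along the family: you compare the generic fibre directly to the closed fibre using $A$-freeness of $H^i_\mm(R')$ (which forces you to invoke \cite{CoVa}), spread an $F$-stable graded submodule over $A[t^{-1}]$, saturate in $M$, and specialize at $t=0$. Each route trades one nontrivial input for another: yours needs the full strength of \cite{CoVa}, while the paper's needs the deformation theorem for $F$-anti-nilpotency.

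Two real gaps. First, failure of $F$-anti-nilpotency is witnessed by a nonzero \emph{nilpotent $F$-stable subquotient}, not necessarily by a submodule $N$ with $F^e=0$ on $H^i_\mm(S/I)/N$: a module can be full (so $H^i_\mm/N$ is never nilpotent for proper $N\supseteq \langle F^e H^i_\mm\rangle$) yet fail anti-nilpotency via a nilpotent piece in the middle of the Lyubeznik filtration. You should therefore run the saturation argument on a \emph{pair} $L\subseteq N$ of graded $F$-stable submodules with $N/L$ nonzero and nilpotent, forming $\mathcal L^{\mathrm{sat}}\subseteq\mathcal N^{\mathrm{sat}}\subseteq M$ and specializing $\mathcal N^{\mathrm{sat}}/\mathcal L^{\mathrm{sat}}$ mod $t$; this does go through ($M/\mathcal L^{\mathrm{sat}}$ is $t$-torsion-free, so $\mathcal N^{\mathrm{sat}}/\mathcal L^{\mathrm{sat}}$ is as well, and is nonzero, bounded above with finitely generated graded pieces, hence survives reduction mod $t$). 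Second, the assertion that such an obstruction, if present, can be found among \emph{graded} $F$-stable subquotients is exactly Proposition~\ref{prop graded Lyubeznik filtration} of the paper, whose proof passes through graded $\mathcal F$-modules, graded roots, and \cite[Prop.~2.2]{LSW}; it is a substantive lemma rather than bookkeeping and should be cited or reproved, not waved at.
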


We next investigate the relationship between Herzog ideals and $F$-purity. Note that, $F$-purity implies $F$-anti-nilpotency by the main result of \cite{Ma14} (see also Theorem~\ref{t:F-pure->anti} for a stronger statement). In general, Herzog ideals do not necessarily define $F$-pure quotients (see Remark~\ref{r:Herzog->open F-pure}). However, our experiments may suggest some surprising connection (in characteristic $0$) between Herzog ideals after a change of coordinates and ideals defining $F$-pure quotients for {\it all} primes $p\gg 0$. More precisely, we ask and study the following.

\begin{question}\label{q:genovaCY}
Let $I\subseteq \QQ[X_0,\dots,X_n]$ be a homogeneous ideal. Consider the following two conditions:
\begin{enumerate}
\item $\FF_p[X_0,\dots,X_n]/I_p$ is $F$-pure for all primes $p\gg 0$.
\item Possibly after a change of variables, $I_\CC\subseteq \CC[X_0,\dots,X_n]$ is a Herzog ideal.
\end{enumerate}
Under what assumptions are the two conditions above equivalent?
\end{question}

Of course, one cannot expect the implication $(2)\implies (1)$ without Gorenstein-type assumptions (see Remark~\ref{r:Herzog->open F-pure}). On the other hand, we were not aware of any counterexample to the implication $(1)\implies (2)$. Our results in Section 3 address Question~\ref{q:genovaCY} in the following cases:

\begin{theorem}[{Proposition~\ref{p:curves}}]
Let $I\subseteq \QQ[X_0,\dots,X_n]$ be a homogeneous ideal such that $I_\CC$ defines a projective (connected) nonsingular curve $C\subseteq \PP^n$ containing at least one $\QQ$-rational point $P\in C$. Then $(1)$ and $(2)$ in Question~\ref{q:genovaCY} are equivalent.
\end{theorem}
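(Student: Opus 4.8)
The plan is to prove the stronger statement that each of (1) and (2) is equivalent to: after the change of coordinates allowed in (2), $C$ is a rational normal curve of degree $n$ in $\PP^n$ (in particular $g(C)=0$, $C$ is linearly normal, and $\FF_p[X]/I_p$ is the $n$th Veronese subring of $\FF_p[s,t]$). Granting this, the ``easy'' direction is immediate: the ideal of a rational normal curve is the ideal of $2\times 2$ minors of a $2\times n$ Hankel matrix of variables, which has a squarefree initial ideal (a classical computation, and an instance of the Knutson / Cartwright--Sturmfels examples recalled above), already over $\QQ$ and hence over $\CC$, so (2) holds; and the $n$th Veronese subring of $\FF_p[s,t]$ is a direct summand of $\FF_p[s,t]$ as a module over itself, so it is $F$-pure for every $p$ (a polynomial ring is $F$-pure and direct summands of $F$-pure rings are $F$-pure), so (1) holds.

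For $(1)\Rightarrow$ rational normal curve I would proceed in two steps. First, $F$-purity of $R_\mm$ passes to the punctured spectrum and descends to $\Proj$, so for $p\gg 0$ the good reduction $C_p$ of $C$ is globally Frobenius split; by the classification of Frobenius split smooth projective curves this forces $g(C_p)\le 1$, with equality only if $C_p$ is ordinary. Since $C$ and hence its Jacobian are defined over $\QQ$, Elkies's theorem on the infinitude of supersingular primes rules out $g(C)=1$, so $g(C)=0$. Second, $F$-purity implies $F$-injectivity, so Frobenius is injective on $H^1_\mm(R_\mm)$; but for the saturated homogeneous ideal $I$ this module is concentrated in strictly positive degrees and vanishes in high degrees (it records the failure of projective normality), while Frobenius multiplies degrees by $p$, so $H^1_\mm(R_\mm)$ must vanish altogether. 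Thus $C$ is projectively normal, and a nondegenerate projectively normal smooth rational curve in $\PP^n$ is a rational normal curve.

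For $(2)\Rightarrow$ rational normal curve: if $\init_<(I_\CC)$ is squarefree then $\CC[X]/\init_<(I_\CC)$ is the Stanley--Reisner ring $\CC[\Delta]$ of a one-dimensional simplicial complex $\Delta$ (a graph), and by the resolution of Herzog's conjecture \cite{CoVa} it has the same graded Betti numbers, hence the same depth, as $\CC[X]/I_\CC$. One checks that $\CC[\Delta]$ is Cohen--Macaulay (equivalently, $\Delta$ is connected --- this needs a short argument from connectedness of $C$), so $C$ is arithmetically Cohen--Macaulay, in particular linearly normal; comparing Hilbert polynomials identifies $\deg C$ with the number of edges of $\Delta$, and, assuming $C$ nondegenerate (as we may), $\Delta$ uses all $n+1$ vertices, whence $g(C)=\#E(\Delta)-\#V(\Delta)+1=\deg C-n$. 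So it remains to show $g(C)=0$. For $g(C)=1$ one argues as in (1): by Theorem~\ref{t:HF-anti} and \cite{KoVa}, $\FF_p[X]/I_p$ is $F$-anti-nilpotent (in particular $F$-injective) and Cohen--Macaulay for $p\gg 0$, so $C_p$ is ordinary for all $p\gg 0$, contradicting Elkies. For $g(C)\ge 2$ one must instead argue combinatorially that no squarefree monomial ideal having the graded Betti table of an arithmetically Cohen--Macaulay curve of degree $n+g$ can be a leading-term ideal: the candidate complexes $\Delta$ are pinned down by that Betti table (to graphs that are a single cycle with trees attached), and smoothness of $C$ forces monomials into a minimal system of generators of $I_\CC$ which no monomial order can simultaneously dominate by the candidate squarefree leading monomials --- as in the model case $n=2,\ g=1$, where smoothness of the plane cubic $f$ forces a monomial of $X_i$-degree $\ge 2$ for each $i$ and the requirement $\init_<(f)=X_0X_1X_2$ then yields a cyclic incompatibility among the relations $X_i>X_j$, and the model case $n=3,\ g=1$, where the only candidate $\init_<(I_\CC)=(X_0X_2,X_1X_3)$ violates multiplicativity of any monomial order.

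The hard part is precisely this last step, i.e.\ excluding $g(C)\ge 2$ under (2): there is no global Frobenius splitting of $C_p$ to exploit here (Herzog only yields $F$-anti-nilpotency and $F$-fullness, which do not bound the genus), and Elkies does not apply (it is open whether every curve of genus $\ge 2$ over $\QQ$ has infinitely many non-ordinary primes). Carrying it out requires a genuine comparison between the graded Betti tables of arithmetically Cohen--Macaulay curves of degree $n+g$ and those of Cohen--Macaulay Stanley--Reisner rings of connected graphs on $n+1$ vertices, combined with the monomial-order obstruction illustrated above.
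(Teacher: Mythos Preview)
Your argument for $(1)\Rightarrow(2)$ is essentially the paper's: genus $\le 1$ from Frobenius splitting of $C_p$, Elkies rules out genus $1$ (using the $\QQ$-rational point), projective normality from injectivity of Frobenius on $H^1_\mm$, hence $C$ is a rational normal curve with the explicit Hankel squarefree initial ideal.

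For $(2)\Rightarrow(1)$ the paper takes a much shorter route than you do: it simply invokes \cite{HTVW}, where it is shown that a nonsingular projective curve whose homogeneous ideal admits a squarefree Gr\"obner degeneration is already a rational normal curve; the Veronese direct-summand argument then gives $F$-purity for every $p$, exactly as in your ``easy'' direction. You instead attempt to reprove this classification from scratch, and this is where the genuine gap lies. Your reduction to an arithmetically Cohen--Macaulay curve with $g=\#E(\Delta)-\#V(\Delta)+1$ is fine (connectedness of $\Delta$ follows from connectedness of $C$ via \cite{CoVa}, since $H^1_\mm$ in degree $0$ is preserved), and your elimination of $g=1$ via $F$-injectivity plus Elkies is correct. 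But the case $g\ge 2$ is left as a programme, not a proof. You assert that the Betti table pins $\Delta$ down to ``a single cycle with trees attached'' --- yet such graphs have first Betti number $1$, so this description is already wrong for $g\ge 2$ --- and that smoothness of $C$ produces a monomial-order incompatibility, but you carry this out only in two $g=1$ model cases, which do not lie in the range you need. The obstruction you sketch for a smooth plane cubic is genuine (and the paper records the analogous fact for degree $n{+}1$ hypersurfaces in a later remark), but extending it to arbitrary $n$ and $g\ge 2$, where $I$ is far from principal and the candidate $\Delta$'s proliferate, is precisely the content of the result from \cite{HTVW} you would be reproving. As you yourself note, there is no Frobenius or arithmetic shortcut here (an Elkies-type statement for $g\ge 2$ is open), so without either citing \cite{HTVW} or supplying the full combinatorial argument, your proof of $(2)\Rightarrow(1)$ is incomplete.
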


\begin{theorem}[{Theorem~\ref{t:genova}}]
Let $f\in \QQ[X_0,\dots,X_n]$ be a homogeneous polynomial of degree $\leq 3$. Set $I=(f)$ and $H=\Proj \left( \CC[X_0,\dots,X_n]/I_\mathbb{C} \right)$. Then $(1)$ and $(2)$ in Question~\ref{q:genovaCY} are equivalent assuming either $H$ is klt or $H$ is a curve.
\end{theorem}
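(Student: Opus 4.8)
The plan is to prove the two implications by quite different means: $(2)\Rightarrow(1)$ will be soft and completely general, while $(1)\Rightarrow(2)$ is where the degree bound and the hypothesis on $H$ are used.

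\emph{The implication $(2)\Rightarrow(1)$.} I would first record, as a lemma valid for arbitrary homogeneous Herzog hypersurfaces and for every $p$ (not only $p\gg 0$), that condition $(1)$ holds. Since $I=(f)$ is principal, $\{f\}$ is automatically a Gr\"obner basis for any monomial order $<$, so $\init_<(I)=(\init_<(f))$; if this is squarefree then, $f$ being homogeneous of degree $d\le 3$, necessarily $\init_<(f)=X_{i_1}\cdots X_{i_d}$ with distinct indices. For $p\gg 0$ the leading coefficient survives modulo $p$, and then $\init_<(f_p^{\,p-1})=(\init_<(f_p))^{p-1}=(X_{i_1}\cdots X_{i_d})^{p-1}$, since every other monomial occurring in $f_p^{\,p-1}$ is strictly $<$-smaller. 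As $(X_{i_1}\cdots X_{i_d})^{p-1}=X_{i_1}^{p-1}\cdots X_{i_d}^{p-1}\notin\mm^{[p]}$, Fedder's criterion gives that $\FF_p[X_0,\dots,X_n]/I_p$ is $F$-pure.

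\emph{The implication $(1)\Rightarrow(2)$, small degree and the curve case.} The cases $\deg f\le 2$ I would dispose of directly: for $\deg f=1$, $f$ is a coordinate after a linear change and both conditions hold; for $\deg f=2$ one checks that $(1)$ holds iff $\operatorname{rank}f\ge 2$, and $\operatorname{rank}f\ge 2$ is exactly the condition making $(f)_\CC$ Herzog (diagonalise over $\CC$ as $f=X_0X_1+q(X_2,\dots,X_n)$ and use $\init=X_0X_1$). So assume $\deg f=3$, and first that $H$ is a curve, so $n=2$ and $H\subseteq\PP^2$ is a plane cubic. By Hara--Watanabe, $(1)$ forces the affine cone over $H$ to be log canonical, which for a plane cubic means $H$ is reduced with at worst ordinary nodes; and $H$ cannot be smooth, since a smooth plane cubic over $\QQ$ has infinitely many supersingular primes (Elkies), at which the Hasse invariant — precisely the coefficient of $(X_0X_1X_2)^{p-1}$ in $f^{p-1}$, the only monomial outside $\mm^{[p]}$ that can appear — vanishes, contradicting $(1)$. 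Thus $H$ has a node $P$; placing $P=[0:0:1]$ and choosing coordinates so that the quadratic part of $f$ at $P$ is $X_0X_1$, one gets $f=X_0X_1X_2+(\text{cubic in }X_0,X_1)$, so $X_0X_1X_2$ is the unique monomial of $f$ divisible by $X_2$, hence a vertex of its Newton polytope, and a weight order selecting it shows $(f)_\CC$ is Herzog.

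\emph{The implication $(1)\Rightarrow(2)$, the klt case.} If $H$ is klt then $n\ge 3$ (for $n=2$, $H$ klt already means $H$ smooth, covered above), $H$ is a Fano cubic hypersurface, and the cone over $H$ is klt. Here I would aim to prove directly that $(f)_\CC$ is Herzog — then $(1)$ is automatic by the first part, and the equivalence is trivial. If $H$ is singular at $P$, then $P$ has multiplicity $2$, and a change of coordinates centred at $P$ that uses the quadratic part of $f$ at $P$ to introduce a variable occurring to the first power should expose a squarefree degree‑$3$ monomial as a vertex of the Newton polytope. If $H$ is smooth, I would use that $H$ admits a ``triangle'' section: a hyperplane meeting $H$ in three hyperplanes in general position (for $n=3$, a tritangent plane); setting this hyperplane to $\{X_n=0\}$ and the three planes to coordinate planes gives $f=X_0X_1X_2+X_n\,q$ with $q$ a quadric, so $X_0X_1X_2$ is the unique monomial of $f$ with $\deg_{X_n}=0$, hence a vertex. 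The hard part is precisely this klt case with $H$ singular: for the mildest singularities (a node, or any point whose quadratic part has rank $\ge 2$) the coordinate change above works transparently, but already for a $D_4$ point on a cubic surface the quadratic part at the singular point has rank $1$ and the cross-term trick does not produce a squarefree vertex, so one apparently needs the explicit classification of such cubic hypersurfaces — together with the analogous analysis of smooth cubic $n$-folds for $n\ge 4$ — to supply the required change of coordinates in the remaining cases. Getting past this classification-by-cases is the crux; everything else is routine.
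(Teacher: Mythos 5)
Your overall architecture matches the paper's — Fedder for $(2)\Rightarrow(1)$, Elkies plus a singular-point normal form for the curve case, and ``triangle of planes'' normal forms for cubic surfaces — but the proposal has two genuine gaps in the klt branch, and one imprecision in the curve case worth flagging.

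\textbf{The klt singular case for $n=3$.} You explicitly defer this (``one apparently needs the explicit classification''), and indeed this \emph{is} the crux. The paper's proof invokes the classification of singular klt cubic surfaces due to Sakamaki (the explicit list of normal forms): in all but three of those normal forms a squarefree initial monomial is visible on sight, and the remaining three equations ($X_3X_0^2 + X_1^3 + X_2^3$, $X_3X_0^2 + X_0X_2^2 + X_1^2X_2$, $X_3X_0^2 + X_0X_2^2 + X_1^3$) are treated by explicit changes of coordinates (or, for the $D_5$ and $E_6$ cases, by recognizing the singularity type of an alternative equation that already has a squarefree initial monomial). Your ``cross-term trick'' observation is correct — it breaks already at rank-one quadratic parts — but there is no soft replacement for the case-check.

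\textbf{The case $n>3$.} Here your proposed strategy is actually wrong, not just incomplete. You suggest that a smooth cubic $n$-fold admits a hyperplane section consisting of three hyperplanes in general position (the $n\ge 4$ analogue of a tritangent plane). Such a section would exhibit three $(n-2)$-planes on $H$, but a smooth cubic hypersurface of dimension $\geq 3$ contains no linear subspaces of dimension $n-2$ (already a smooth cubic threefold contains no $\PP^2$), so no such section exists. The paper instead runs a Bertini argument: a generic hyperplane section of a klt cubic in $\PP^n$ is again a klt cubic in $\PP^{n-1}$ (\cite[Lemma 5.17]{KollarMori}), so after a change of coordinates $f = f' + X_n f''$ with $f'\in \CC[X_0,\dots,X_{n-1}]$ defining a klt cubic. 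By induction $\init(\phi'(f'))=X_0X_1X_2$ for some $\phi'$, and extending the order by DegRevLex in $X_n$ (lowest) picks out the same leading monomial. This reduction-to-$n=3$ step is what you are missing, and without it the $n>3$ case is not established.

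\textbf{The curve case.} Your reduction — ``$(1)$ forces log canonical, hence at worst ordinary nodes'' — is morally right but informal: the affine cone over a singular plane cubic (say a cusp) is non-normal, so lc is not directly applicable without passing to slc/demi-normal language. The paper sidesteps this entirely. After placing a singular point at $[1:0:0]$, write $f = Xq + g$ with $q$ a quadric in $Y,Z$ and $g$ a cubic in $Y,Z$, and split on the rank of $q$: if $\operatorname{rank} q = 2$, then $\init(f) = XYZ$ for LEX; if $\operatorname{rank} q \le 1$, a direct monomial count shows $(XYZ)^{p-1}\notin\supp(f^{p-1})$, hence not $F$-pure by Fedder, contradicting $(1)$. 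This is self-contained and avoids the lc/slc subtlety. I'd recommend replacing your Hara–Watanabe appeal with this elementary Fedder exclusion.

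Two small remarks. Your $(2)\Rightarrow(1)$ is a perfectly good direct Fedder argument, though you should be explicit about the spreading-out: the change of coordinates lives over a finitely generated $\ZZ$-subalgebra $A\subseteq\CC$; for $p\gg 0$ reduce modulo a prime of $A$ over $p$ and then descend $F$-purity along the faithfully flat map $\FF_p[X]/I_p\to K[X]/I_K$. (The paper instead uses that a Gorenstein $F$-injective ring is $F$-pure.) And in the smooth $n=3$ case, rather than asserting a tritangent-with-independent-lines exists, the paper cites the Cayley–Salmon pentahedral form $f = l_1l_2l_3 - m_1m_2m_3$; the linear-independence claims you would need are exactly the content of that classical result.
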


Finally, in Section 4, we prove some miscellaneous results on the annihilators of $F$-stable submodules of local cohomology modules. We will prove an extension of the main theorem of \cite{Ma14}, and we will prove the following generalization of some results in \cite{DSGNB}.

\begin{theorem}[{Proposition~\ref{p:radcomp}}]
Let $(R,\mm)$ be an $F$-finite local ring of prime characteristic $p$, and $i \in \ZZ$. If the annihilator $J$ of an $F$-stable subquotient of $H_{\mm}^i(R)$ is radical (e.g., if the Frobenius action on that subquotient is injective), then $J$ is a uniformly compatible ideal. In particular, if $R$ is $F$-anti-nilpotent, then the annihilator of any $F$-stable subquotient of $H_{\mm}^i(R)$ is uniformly compatible.
\end{theorem}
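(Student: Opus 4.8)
The plan is to transport the statement, via Matlis duality, into a statement about Cartier (i.e.\ $p^{-1}$-linear) submodules of the cohomology of a dualizing complex, and there to use the correspondence between uniformly $F$-compatible ideals and $F$-compatible subobjects on the dual side. First I would reduce to the case that $(R,\mm)$ is complete: uniform $F$-compatibility of an ideal can be checked after completion, since $\Hom_R(F^e_*R,R)\otimes_R\widehat R\cong\Hom_{\widehat R}(F^e_*\widehat R,\widehat R)$ and ideals of $R$ contract from ideals of $\widehat R$, while $H^i_\mm(R)\cong H^i_{\mm\widehat R}(\widehat R)$ compatibly with the natural Frobenius and any subquotient of $H^i_\mm(R)$ is automatically an $\widehat R$-module. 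Now write $(-)^\vee=\Hom_R(-,E_R(R/\mm))$, let $\omega_R^\bullet$ be a normalized dualizing complex, and set $H_i:=H^i_\mm(R)^\vee\cong H^{-i}(\omega_R^\bullet)$, a finitely generated $R$-module. By $F$-finiteness the natural Frobenius maps $H^i_\mm(R)\to F^e_*H^i_\mm(R)$ dualize to a compatible family of $R$-linear maps $F^e_*H_i\to H_i$, i.e.\ to a $p^{-1}$-linear (``Cartier'') operator $\kappa$ on $H_i$; Matlis duality identifies the $F$-stable submodules of $H^i_\mm(R)$ with the $\kappa$-stable submodules of $H_i$ and preserves annihilators, so the given $F$-stable subquotient $N=N_2/N_1$ corresponds to a finitely generated Cartier subquotient $(W,\kappa)$ of $H_i$ with $\Ann_RW=\Ann_RN=J$. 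As $JW=0$ and $J^{[p]}\subseteq J$, the operator $\kappa$ descends to make $W$ a faithful Cartier module over the reduced ring $\bar R:=R/J$.

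Next I would use radicality of $J$ to reduce to the situation where the Frobenius acts injectively on $N$, equivalently where $\kappa$ is surjective on $W$. Replacing $N$ by $N/N_{\mathrm{nil}}$, with $N_{\mathrm{nil}}=\bigcup_e\ker(F^e|_N)=\ker(F^{e_0}|_N)$ for $e_0\gg0$ (as $N$ is artinian), gives an $F$-stable subquotient on which $F$ is injective, and the point is that $\Ann_R(N/N_{\mathrm{nil}})=J$: the inclusion ``$\supseteq$'' is clear, and for ``$\subseteq$'', if $aN\subseteq N_{\mathrm{nil}}$ then $F^{e_0}(aN)=0$, whence, using the identity $F^{e_0}(an)=a^{p^{e_0}}F^{e_0}(n)$ together with the observation that $N/R\!\cdot\!F^{e_0}(N)$ is killed by $F^{e_0}$ and iteration, one obtains $a^{p^{m}}\in\Ann_R N$ for some $m$, so $a\in\sqrt J=J$. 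This is exactly the step in which radicality is indispensable: without it, passing to $N/N_{\mathrm{nil}}$ genuinely enlarges $J$, which is why the hypothesis cannot be dropped. After this reduction $W$ is a \emph{faithful, surjective} Cartier subquotient of $H^{-i}(\omega_R^\bullet)$ over $\bar R$.

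Finally I would conclude that $J$ is uniformly $F$-compatible. Since $J=\bigcap_j\mathfrak q_j$ is the intersection of its finitely many minimal primes and finite intersections of uniformly $F$-compatible ideals are again uniformly $F$-compatible, it suffices to show each $\mathfrak q_j$ is uniformly $F$-compatible; for this I would invoke the correspondence between $F$-compatible Cartier subobjects of $H^{-i}(\omega_R^\bullet)$ and uniformly $F$-compatible ideals (through local duality and the theory of $p^{-e}$-linear maps, as developed in \cite{DSGNB} and the references there), under which the faithful surjective Cartier module $W$ over $\bar R$ exhibits $V(J)$, hence each component $V(\mathfrak q_j)$, as a union of centers of sharp $F$-purity --- which are precisely the radical uniformly $F$-compatible primes. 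The ``In particular'' is then immediate: if $R$ is $F$-anti-nilpotent, then by (the extension of) the main theorem of \cite{Ma14} the Frobenius is injective on $H^i_\mm(R)/N'$ for every $F$-stable $N'$, hence on every $F$-stable subquotient $N_2/N_1\hookrightarrow H^i_\mm(R)/N_1$; such a subquotient has radical annihilator, because $a^{p^e}N=0$ gives $F^e(aN)=a^{p^e}F^e(N)=0$ and thus $aN=0$ by injectivity, so its annihilator is uniformly $F$-compatible by the above.

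I expect the genuine difficulties to be the second paragraph --- spending the radicality hypothesis exactly, so as to know that passing to $N/N_{\mathrm{nil}}$ does not enlarge $J$ --- and, in the last step, pinning down the precise form of the $F$-compatibility correspondence for Cartier \emph{subquotients} (rather than submodules) of $H^{-i}(\omega_R^\bullet)$; everything else is formal manipulation with Matlis duality and the basic calculus of $p^{-e}$-linear maps.
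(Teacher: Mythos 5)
Your high-level instinct---transport everything through Matlis duality to the Cartier side, localize, and use a compatibility correspondence---is in the right territory, and your derivation of the ``in particular'' clause is correct. But the second paragraph contains a genuine gap that invalidates the whole chain: the reduction from $N$ to $N/N_{\mathrm{nil}}$ does \emph{not} preserve the annihilator, even when $J=\Ann_R N$ is radical. Take $R=\FF_p[[x]]/(x^2)$ and $N=(x)\subseteq R=H^0_\mm(R)$. Here $J=\Ann_R N=(x)$ is radical (it is the nilradical). But $F$ is nilpotent on $N$ (indeed $F(x)=x^p=0$), so $N_{\mathrm{nil}}=N$, $N/N_{\mathrm{nil}}=0$, and $\Ann_R(N/N_{\mathrm{nil}})=R\neq(x)$. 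Your ``iteration'' argument claims $a^{p^m}\in\Ann_R N$ whenever $aN\subseteq N_{\mathrm{nil}}$; taking $a=1$ in this example disproves that. So the reduction destroys the very object you need to control, and the later step (a faithful surjective Cartier module over $R/J$) never gets off the ground. In addition, the last paragraph leans on a ``correspondence for Cartier \emph{subquotients}'' that you yourself flag as unsettled; no such clean correspondence is available at the level of generality you need, and you would in any case be proving compatibility of $\Ann(N/N_{\mathrm{nil}})$, not of $J$.

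The paper's proof spends the radicality hypothesis in a different, and essential, place. Argue by contradiction: if $J$ is not uniformly compatible, there is $\phi\in\Hom_R(F^e_*R,R)$ with $\phi(F^e_*J)\not\subseteq J$, and \emph{because $J$ is radical} there is a minimal prime $P$ of $J$ with $\phi(F^e_*J)\not\subseteq P$. Localizing at $P$ forces $\phi(F^e_*J_P)=R_P$; since $J_P\subseteq PR_P=\mm_{R_P}$, this is an $F$-splitting, so $R_P$ is $F$-pure, hence $F$-anti-nilpotent (Theorem~\ref{t:F-pure->anti}). One then transfers the $F$-stable subquotient to a Cartier subquotient of $H^{-i}(\omega_R^\bullet)$, localizes at $P$, and dualizes back to an $F$-stable subquotient of some $H^j_{P}(R_P)$ with annihilator $J_P$; since $R_P$ is $F$-anti-nilpotent every $F$-stable submodule is full, so Proposition~\ref{prop: F-full submodules} says $J_P$ is uniformly compatible, a contradiction. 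The crucial trick you are missing is this localization at a carefully chosen minimal prime: radicality is used to produce the prime $P$, not to control the nilpotent part of $N$.
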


\subsection*{Acknowledgments}
The first and third authors were partially supported by the MIUR Excellence Department Project CUP D33C23001110001, PRIN 2022 Project 2022K48YYP, and by INdAM-GNSAGA. The second author was partially supported by NSF grants DMS-2302430 and DMS-2424441 when preparing this article. The second author would like to thank Karl Schwede for first pointing out the gap in \cite[Theorem 4.6]{Ma14}, and for many insightful discussions on $F$-singularities throughout the years.

\section{Positive characteristic preliminaries} 
Throughout this paper, unless otherwise stated, all rings will be commutative, Noetherian, with multiplicative identity $1$. Let $R$ be a ring of prime characteristic $p$.
For $e \in \NN$, let $F^e:R \to R$ be the $e$-th iterates of the Frobenius endomorphism on $R$, that is, the $p^e$-th power map. For an $R$-module $M$, we denote by $F^e_*M$ the corresponding $R$-module given by restriction of scalars via $F^e$.

\begin{definition}
With notations as above, we say $R$ is 
\begin{itemize}
\item {\it $F$-finite} if $F^e_*R$ is a finitely generated $R$-module for some (equivalently, all) $e \in \NN$.
\item {\it $F$-pure} if the Frobenius map is pure, i.e., if for any $R$-module $M$ the induced map $M \to F_*R \otimes_R M$ is injective.
\item {\it $F$-split} if the Frobenius map splits, i.e., if there exists $\phi \in \Hom_R(F_*R,R)$ such that $\phi(r) = r$ for all $r \in R$.
\end{itemize}
\end{definition}
It is easy to see that $F$-split always implies F-pure. Moreover, the two notions are equivalent for $F$-finite rings and for complete local rings, but they may differ in general. If $R$ is a finitely generated algebra over a field $k$ or more generally a complete local ring with coefficient field $k$, we point out that $R$ is $F$-finite if and only if $[k:k^p]< \infty$. We refer the readers to \cite[Chapters 1 and 2]{MaPolstraFsingularitesBook} for these facts.

\subsection{Modules with a Frobenius action} We use the same notations as above.  

\begin{definition} A {\it Frobenius action} on an $R$-module $M$ is an additive map $F: M \to M$ such that $F(rx) = r^pF(x)$ for all $r \in R$ and all $x \in M$. An $R$-submodule $N \subseteq M$ is said to be {\it $F$-stable} if $F(N) \subseteq N$. 
\end{definition}

Note that giving a Frobenius action on $M$ is the same as giving an $R$-linear map $M \to F_*M$. For $e \in \NN$, let $\mathcal{F}^e_R(-)$ be the Frobenius functor of Peskine-Szpiro on the category of $R$-modules, that is, the functor defined as the base change to $F^e_*R$ followed by the identification of $F^e_*R$ with $R$. It is easy to see that giving a Frobenius action on $M$ is equivalent to giving an $R$-linear map $\mathcal{F}_R(M) \to M$: before identifying $F_*R$ with $R$, this map is given by $F_*r \otimes x \mapsto rF(x)$.

Let $R[F]$ be the Frobenius skew polynomial ring, i.e., the non-commutative ring generated over $R$ by the symbols $1, F, F^2 \ldots$ by requiring that $Fr=r^pF$ for all $r \in R$. Then having a Frobenius action is the same as being a left $R[F]$-module; moreover an $R$-submodule is $F$-stable if and only if it is a left $R[F]$-submodule.\footnote{In this article we will mostly deal with left $R[F]$-modules. Whenever not specified, an $R[F]$-module will always mean a left $R[F]$-module.}

\begin{definition} If $R$ is $\NN$-graded, then a graded $R[F]$-module $M$ is a graded $R$-module $M$ such that $F(M_d) \subseteq M_{pd}$ for all $d \in \ZZ$.
\end{definition}

Note that if $M$ is a graded $R[F]$-module then the associated $R$-linear map $\mathcal{F}_R(M) \to M$ is degree preserving.

\begin{definition} Let $M$ be an $R[F]$-module. We say that $M$ is
\begin{itemize}
\item {\it full} if the map $\mathcal{F}_R^e(M) \to M$ is surjective for some (equivalently, all) $e \in \NN$.
\item {\it anti-nilpotent} if, for any $R[F]$-submodule $N \subseteq M$, the induced Frobenius action $F:M/N \to M/N$ is injective.
\end{itemize}
\end{definition}

We remark that $M$ is anti-nilpotent if and only if every $R[F]$-submodule of $M$ is full, see \cite[Lemma 2.2]{MaQuyDeformation}. In particular, if $M$ is anti-nilpotent then it is full.

\subsection{Local cohomology modules and $F$-singularities}
Given an ideal $I=(f_1,\ldots,f_t) \subseteq R$ and an $R$-module $M$, we recall that the $i$-th local cohomology modules $H^i_I(M)$ can be defined as the $i$-th cohomology of the \v{C}ech complex
\[
\xymatrix{
0 \ar[r] & M \ar[r] & \bigoplus_{i=1}^t M_{f_i} \ar[r] & \bigoplus_{1 \leq i < j \leq t} M_{f_if_j} \ar[r] & \ldots\ldots  \ar[r] & M_{f_1\cdots f_t} \ar[r] & 0.
}
\]
Moreover, the Frobenius map on $R$ induces maps $F^e:H^i_I(R) \to H^i_{I^{[p^e]}}(R) \cong H^i_I(R)$ for all $i \in \ZZ$ and all $e \in \NN$, where $I^{[p^e]} = F^e(I)R = (f_1^{p^e},\ldots,f_t^{p^e})$ denotes the $e$-th Frobenius power of $I$. Equivalently, they can be seen as $R$-linear maps $H^i_I(R) \to H^i_I(F^e_*R) \cong F^e_*(H^i_I(R))$.

\begin{definition} Let $(R,\mm)$ be a local ring of prime characteristic $p$. We say that $R$ is 
\begin{itemize}
\item {\it $F$-injective} if the induced Frobenius maps $F:H^i_\mm(R) \to H^i_\mm(R)$ are injective for all $i \in \ZZ$.
\item {\it $F$-full} if $H^i_\mm(R)$ is a full $R[F]$-module for all $i \in \ZZ$.
\item {\it $F$-anti-nilpotent} if $H^i_\mm(R)$ is an anti-nilpotent $R[F]$-module for all $i \in \ZZ$.
\end{itemize}
\end{definition}

By the discussions above, $F$-anti-nilpotent local rings are both $F$-full and $F$-injective. It is easy to see that $F$-full rings need not be $F$-injective; in fact, any Cohen-Macaulay local ring is $F$-full but not necessarily $F$-injective (e.g., $F$-injective rings are necessarily reduced, see \cite[Lemma 3.11]{QuyShimomoto}). Conversely, there are examples of F-injective rings which are not $F$-full \cite[Example 3.5]{MaSchwedeShimomoto}. In relation with the $F$-singularities defined above, we recall that $F$-pure local rings are $F$-anti-nilpotent by \cite[Theorem 1.1]{Ma14}, but the converse does not hold in general, see \cite[Sections 5 and 6]{QuyShimomoto}.

While the notions of being $F$-injective and $F$-full localize, see \cite[Proposition 3.3]{DaMuF-inj} and \cite[Proposition 2.7]{MaQuyDeformation}, it is not known if the same is true for the $F$-anti-nilpotent property:

\begin{remark}\label{Erratum}
It is not known whether $F$-anti-nilpotency localizes. This localization property (and, in fact, a slightly stronger statement) was claimed in \cite[Theorem 4.6 and Theorem 5.10]{Ma14}. However, the proof of \cite[Theorem 4.6 $(2)\Rightarrow(1)$]{Ma14} contains a gap: the specific error in the argument was that, after localization, the completeness assumption was lost so one cannot use 
\cite[Theorem 4.4]{Ma14}. In the $F$-finite local case, the Matlis dual of each Cartier submodule of $H^{-i}(\omega^\bullet_R)$ is an $F$-stable submodule of $H_{\mm}^i(R)$. Thus, the implication \cite[Theorem 4.6 $(1)\Rightarrow(2)$]{Ma14} still holds, and the rest of the results in \cite[Section 5]{Ma14} (except \cite[Theorem 5.10]{Ma14}) are unaffected by this mistake.
\end{remark}

The gap in \cite[Theorem 4.6]{Ma14} was first pointed out to the second author by Karl Schwede, who proposed to use the notion of Cartier-anti-nilpotency instead of $F$-anti-nilpotency to reconcile the localization property.

%
%
%
%

\begin{definition}
An F-finite ring $R$ of prime characteristic $p$ is called Cartier-anti-nilpotent if $H^{-i}(\omega_R^\bullet)$ is an anti-nilpotent Cartier module (in the sense of \cite[Definition 1.19 on page 491]{SchwedeSmithBook}) for each $i$, where $\omega_R^\bullet$ is the dualizing complex of $R$.
\end{definition}

We record some basic facts about Cartier-anti-nilpotency, see \cite[Lemma 1.20 and Lemma 1.21 on page 491-492]{SchwedeSmithBook}.
\begin{itemize}
  \item For an $F$-finite local ring $(R,\mm)$, if $R$ is $F$-anti-nilpotent, then $R$ is Cartier-anti-nilpotent.
  \item An $F$-finite ring $R$ is Cartier-anti-nilpotent if and only if all localizations of $R$ are Cartier-anti-nilpotent.
\end{itemize}

The following question is open; an affirmative answer would imply that $F$-anti-nilpotency localizes. See \cite[Question 1.27 on page 495]{SchwedeSmithBook} for related questions.

\begin{question}
Let $(R,\mm)$ be an $F$-finite local ring of prime characteristic $p$. If $R$ is Cartier-anti-nilpotent, then is it $F$-anti-nilpotent?
\end{question}

In Proposition \ref{GradedCartier} we will prove that, in the graded situation, the above question has an affirmative answer. We need the
following graded version of \cite[Theorem 4.7]{LyubeznikFmodules} which is well-known to experts. We include a proof here for completeness using the theory of (graded) $\mathcal{F}$-modules, see \cite{LyubeznikFmodules} and \cite{LSW} for their definitions and basic properties.

\begin{proposition} \label{prop graded Lyubeznik filtration}
Let $R$ be an $\NN$-graded algebra over a field $k$ of prime characteristic $p$. For any graded Artinian $R[F]$-module $W$, there exist a filtration of graded $R[F]$-modules 
\[
0=L_0\subseteq N_0 \subseteq L_1 \subseteq N_1 \subseteq \cdots \subseteq L_n \subseteq N_n =W
\]
such that each $L_i/N_{i-1}$ is a simple graded $R[F]$-module with nontrivial Frobenius action, and each $N_i/L_i$ is a graded $R[F]$-module with nilpotent Frobenius action.
\end{proposition}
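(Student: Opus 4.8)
The plan is to mimic the proof of Lyubeznik's filtration result (\cite[Theorem 4.7]{LyubeznikFmodules}) in the graded category, using graded $\mathcal{F}$-modules. First I would pass from the graded Artinian $R[F]$-module $W$ to an associated graded $\mathcal{F}$-module: since $W$ is Artinian and carries a Frobenius action, the direct limit
\[
\mathcal{M} := \varinjlim \left( W \to \mathcal{F}_R(W) \to \mathcal{F}_R^2(W) \to \cdots \right)
\]
is a graded $\mathcal{F}$-module with $W$ as a generating morphism (here I use that $\mathcal{F}_R$ and the structure maps are degree preserving, as noted in the excerpt). The key finiteness input is that graded $\mathcal{F}$-finite modules over $R$ satisfy both ACC and DCC on graded $\mathcal{F}$-submodules — this is the graded analogue of Lyubeznik's finiteness theorem, which holds because a graded $\mathcal{F}$-module supported only at finitely many graded primes, or more relevantly one that is cofinite, has a well-behaved theory via \cite{LSW}. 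Granting this, $\mathcal{M}$ has finite length in the category of graded $\mathcal{F}$-modules, giving a composition series $0 = \mathcal{M}_0 \subsetneq \mathcal{M}_1 \subsetneq \cdots \subsetneq \mathcal{M}_m = \mathcal{M}$ with each $\mathcal{M}_j/\mathcal{M}_{j-1}$ a simple graded $\mathcal{F}$-module.

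Next I would translate this back to $W$. Each graded $\mathcal{F}$-submodule of $\mathcal{M}$ intersects $W$ (the image of $W \to \mathcal{M}$) in a graded $R[F]$-submodule, and conversely; the standard dictionary between $F$-stable submodules of $W$ and $\mathcal{F}$-submodules of $\mathcal{M}$ shows that the lattice of graded $\mathcal{F}$-submodules of $\mathcal{M}$ is order-isomorphic to the lattice of those graded $R[F]$-submodules of $W$ that are "$F$-saturated" (i.e., equal to the preimage of their image in $\mathcal{M}$). Pulling back the composition series, I get graded $R[F]$-submodules $0 = W_0 \subseteq W_1 \subseteq \cdots \subseteq W_m = W$ such that on each quotient $W_j/W_{j-1}$, the Frobenius action is injective away from a submodule with nilpotent action — more precisely, $W_j/W_{j-1}$ corresponds to the simple $\mathcal{F}$-module $\mathcal{M}_j/\mathcal{M}_{j-1}$ and its "nil part" is the maximal $R[F]$-submodule on which $F$ acts nilpotently. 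Setting $N_{j-1}$ to be the nil part of $W_j/W_{j-1}$ pulled back to $W_j$, and $L_j := W_{j-1} + (\text{something})$ — I should arrange the indexing so that $L_j/N_{j-1}$ is the part with injective-and-actually-simple Frobenius action and $N_j/L_j$ is nilpotent. The simplicity of $\mathcal{M}_j/\mathcal{M}_{j-1}$ as an $\mathcal{F}$-module forces $L_j/N_{j-1}$ to be simple as a graded $R[F]$-module with nontrivial (hence, being simple, injective) Frobenius action, because graded $R[F]$-submodules of $L_j/N_{j-1}$ correspond to graded $\mathcal{F}$-submodules of the simple module $\mathcal{M}_j/\mathcal{M}_{j-1}$, of which there are none proper and nonzero.

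The main obstacle I expect is establishing the graded finiteness statement — that graded cofinite (or $\mathcal{F}$-finite) $\mathcal{F}$-modules over an $\NN$-graded $k$-algebra satisfy ACC and DCC on graded $\mathcal{F}$-submodules, with simple subquotients. In the affine/local setting this is Lyubeznik's theorem, but in the graded setting one must check that the grading is compatible with all the constructions (minimal root, $\mathcal{F}$-module structure maps, Matlis-type duality used in \cite{LSW}) and that passing to the graded category does not destroy finite length. Since the excerpt explicitly says this graded version "is well-known to experts" and cites \cite{LyubeznikFmodules} and \cite{LSW}, I would handle this by invoking the graded theory of $\mathcal{F}$-modules developed there: $W$ being graded Artinian ensures $\mathcal{M}$ is a graded $\mathcal{F}$-finite module supported (set-theoretically) at the graded maximal ideal, hence has finite length among graded $\mathcal{F}$-submodules. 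A secondary, more bookkeeping obstacle is getting the precise alternating structure $L_i/N_{i-1}$ simple-nontrivial versus $N_i/L_i$ nilpotent to match up: I would do this by, at each stage, first splitting off the maximal nilpotent $R[F]$-submodule (which exists and is graded because $W$ is Artinian, so the ascending chain $0 \subseteq \ker F \subseteq \ker F^2 \subseteq \cdots$ of graded submodules stabilizes) and then recognizing the quotient as having injective Frobenius, to which the $\mathcal{F}$-module composition series applies cleanly.
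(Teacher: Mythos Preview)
Your proposal has a concrete gap in the very first step: a Frobenius action on $W$ corresponds to an $R$-linear map $\mathcal{F}_R(W)\to W$ (this is stated explicitly in the paper just before the definition of graded $R[F]$-modules), not to a map $W\to\mathcal{F}_R(W)$. So the direct system $W\to\mathcal{F}_R(W)\to\mathcal{F}_R^2(W)\to\cdots$ that you write down does not exist from the given data, and $\mathcal{M}$ is not defined. The standard remedy, and the one the paper uses, is to pass through graded Matlis duality over the polynomial ring $S$: for a full $R[F]$-module $L$ the surjection $\mathcal{F}_R(L)\to L$ dualizes to an injection $L^\vee\hookrightarrow\mathcal{F}_S(L^\vee)$, and it is the direct limit $\mathcal{H}(L):=\varinjlim\bigl(L^\vee\hookrightarrow\mathcal{F}_S(L^\vee)\hookrightarrow\cdots\bigr)$ that is the relevant $\mathcal{F}$-module. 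This simultaneously fixes a second issue you flagged only as an ``obstacle'': Lyubeznik's finite-length theorems require a \emph{regular} base, so one must work with $\mathcal{F}_S$-modules, not $\mathcal{F}_R$-modules.

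Even after this correction, the paper's argument is organized differently from yours. Rather than producing a full composition series of a single $\mathcal{F}_S$-module and pulling it back, the paper works iteratively from the top: set $L_n=\langle F^e(W)\rangle$ for $e\gg 0$ (graded, full, with $W/L_n$ nilpotent), invoke the \emph{ungraded} \cite[Theorem~4.7]{LyubeznikFmodules} to find some $N_{n-1}$ with $L_n/N_{n-1}$ a simple $R[F]$-module, and then---this is the main point---use that $\mathcal{H}(L_n/N_{n-1})\hookrightarrow\mathcal{H}(L_n)$ is a sub-$\mathcal{F}_S$-module of a \emph{graded} $\mathcal{F}_S$-module, hence is itself graded by \cite[Proposition~2.2]{LSW}. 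Choosing a graded root inside $L_n^\vee$ and dualizing back lets one replace $N_{n-1}$ by a graded submodule with the same simple quotient. Your sketch of the back-translation (``$L_j:=W_{j-1}+(\text{something})$'') is where this precision is needed; the paper's approach supplies it via the root/duality mechanism rather than a lattice correspondence.
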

\begin{proof}
First of all, we set $L_n:=\langle F^e(W)\rangle$ for $e\gg0$. Then $L_n$ is graded and full, and $W/L_n$ is nilpotent by construction. We next take any $R[F]$-submodule $N_{n-1}\subseteq L_n$ so that $L_n/N_{n-1}$ is a simple $R[F]$-module, the existence of $N_{n-1}$ is guaranteed by \cite[Theorem 4.7]{LyubeznikFmodules} and the Frobenius action on $L_n/N_{n-1}$ is necessarily nontrivial by the fullness of $L_n$ (in particular $L_n/N_{n-1}$ is full). Note that, a priori, we do not know whether $N_{n-1}$ is graded, but we will show that we can always choose a graded $N_{n-1}$. To see this, write $R=S/I$ where $S$ is an $\mathbb{N}$-graded polynomial ring over $k$ and let $(-)^\vee$ be the graded Matlis duality, we have the following commutative diagram:
\[\xymatrix{
(L_n/N_{n-1})^\vee \ar@{^{(}->}[r] \ar@{^{(}->}[d] & \mathcal{F}_S((L_n/N_{n-1})^\vee) \ar@{^{(}->}[r] \ar@{^{(}->}[d] & \mathcal{F}^2_S((L_n/N_{n-1})^\vee) \ar@{^{(}->}[r] \ar@{^{(}->}[d] & \cdots \\
L_n^\vee  \ar@{^{(}->}[r] & \mathcal{F}_S(L_n^\vee) \ar@{^{(}->}[r] & \mathcal{F}^2_S(L_n^\vee) \ar@{^{(}->}[r] & \cdots  
}
\]
where the horizontal maps are injective by the fullness of $L_n$ and $L_n/N_{n-1}$. After taking diret limit we obtain an inclusion of $\mathcal{F}_S$-modules:
$$\mathcal{H}(L_n/N_{n-1}) \hookrightarrow \mathcal{H}(L_n).$$
Since $L_n$ is graded, $\mathcal{H}(L_n)$ is a graded $\mathcal{F}_S$-module. This implies that $\mathcal{H}(L_n/N_{n-1})$ is a graded $\mathcal{F}_S$-module by \cite[Proposition 2.2]{LSW}. In particular, we can choose a root $M$ of $\mathcal{H}(L_n/N_{n-1})$ that is graded and the map $M\to \mathcal{H}(L_n/N_{n-1})$ is degree-preserving, see \cite[discussion after Proposition 2.3]{LSW}. Replacing $M$ by $M\cap L_n^\vee$ we may assume that $M$ is a graded submodule of $L_n^\vee$: it is still a root of $\mathcal{H}(L_n/N_{n-1})$ by simpleness of $\mathcal{H}(L_n/N_{n-1})$, which in turn implies that $M^{\vee}$ is a simple $R[F]$-module. Replacing $N_{n-1}$ by $(L_n^\vee/M)^\vee$, we have that $N_{n-1}$ is graded and $L_n/N_{n-1}=M^\vee$ is a simple $R[F]$-module. We can now continue this procedure: set $L_{n-1}=\langle F^e(N_{n-1})\rangle$ for $e\gg0$ and choose a graded $R[F]$-submodule $N_{n-2}$ of $L_{n-1}$ so that $L_{n-1}/N_{n-2}$ is a simple $R[F]$-module, etc. This process must terminate after finitely many steps by \cite[Theorem 4.7]{LyubeznikFmodules}. Thus we obtain the desired filtration of graded $R[F]$-modules. 
\end{proof}

\begin{proposition}
\label{GradedCartier}
Let $R$ be an $\NN$-graded algebra over an $F$-finite field of prime characteristic $p$. Then the ideal defining the non-Cartier-anti-nilpotent locus of $R$ is homogeneous. 

Moreover, if $\mm$ is the unique maximal homogeneous ideal of $R$, the following conditions are equivalent:
\begin{enumerate}
  \item $R$ is Cartier-anti-nilpotent;
  \item $R_\mm$ is Cartier-anti-nilpotent;
  \item $R_\mm$ is F-anti-nilpotent.
\end{enumerate}
\end{proposition}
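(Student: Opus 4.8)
The plan is to combine graded local duality with Proposition~\ref{prop graded Lyubeznik filtration} and the two facts recalled above, namely that Cartier-anti-nilpotency localizes and that it is implied by $F$-anti-nilpotency in the $F$-finite local case. Write $R=S/I$ with $S$ an $\NN$-graded $F$-finite polynomial ring, so $R$ has a graded dualizing complex $\omega^\bullet_R$; for each $i$ put $M_i:=H^{-i}(\omega^\bullet_R)$, a finitely generated graded $R$-module with a graded Cartier structure $\kappa$ induced by Frobenius. Graded local duality furnishes degree-preserving isomorphisms $H^i_\mm(R)\cong M_i^\vee$ of graded $R[F]$-modules, where $(-)^\vee$ is graded Matlis duality --- a perfect duality on graded modules, needing no completion --- under which the Frobenius action on $H^i_\mm(R)$ is dual to $\kappa$ on $M_i$, graded $R[F]$-submodules $N\subseteq H^i_\mm(R)$ correspond to graded Cartier submodules $C\subseteq M_i$ with $N^\vee\cong M_i/C$, and $N$ is full precisely when $M_i/C$ has no nonzero nilpotent Cartier submodule (the maximal nilpotent quotient of an $R[F]$-module being dual to the maximal nilpotent submodule of the dual).

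For the first assertion: the non-Cartier-anti-nilpotent locus of $R$ is $\bigcup_i\bigl(\text{non-anti-nilpotent locus of }M_i\bigr)$, which is closed; call its radical defining ideal $J$. To prove $J$ homogeneous it is enough to check that each minimal prime $\pp$ of $J$ equals $\pp^*$, the homogeneous prime generated by the homogeneous elements of $\pp$; and this follows once one knows that $R_{\pp^*}$ Cartier-anti-nilpotent implies $R_\pp$ Cartier-anti-nilpotent. By the standard reduction for $\NN$-graded rings $R_\pp$ is a localization of $R_{\pp^*}[T]$, and Cartier-anti-nilpotency localizes and ascends along $A\to A[T]$ --- the ascent because $\omega^\bullet_{A[T]}\simeq(\omega^\bullet_A\otimes_A A[T])[1]$ and anti-nilpotency of coherent Cartier modules is preserved by such a base change --- which gives the claim. (One can alternatively try to show directly that the non-anti-nilpotent locus of a graded coherent Cartier module is cut out by a homogeneous ideal, since it is determined by graded data.)

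For the equivalences, $(1)\Rightarrow(2)$ is the localization of Cartier-anti-nilpotency and $(3)\Rightarrow(2)$ is the implication $F$-anti-nilpotent $\Rightarrow$ Cartier-anti-nilpotent for $F$-finite local rings; and $(2)\Rightarrow(1)$ holds because $R_\mm$ Cartier-anti-nilpotent forces $J\not\subseteq\mm$, and a homogeneous ideal not contained in the unique maximal homogeneous ideal must be the unit ideal, so the non-Cartier-anti-nilpotent locus is empty. The main content is $(2)\Rightarrow(3)$. Assume $R_\mm$ is Cartier-anti-nilpotent, i.e.\ each $(M_i)_\mm$ is an anti-nilpotent Cartier module over $R_\mm$; we must show each $H^i_\mm(R)\cong M_i^\vee$ is an anti-nilpotent $R[F]$-module, which is exactly the $F$-anti-nilpotency of $R_\mm$ since $H^i_\mm(R)$ is $\mm$-torsion and hence has the same $R[F]$- and $R_\mm[F]$-submodule lattice. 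Let $N\subseteq H^i_\mm(R)$ be any graded $R[F]$-submodule, with corresponding graded Cartier submodule $C\subseteq M_i$. The maximal nilpotent Cartier submodule of the graded module $M_i/C$ is a graded submodule (each $\ker\kappa^e$ is graded); if it were nonzero, graded Nakayama would make it survive localization at $\mm$, yielding a nonzero nilpotent Cartier submodule of $(M_i)_\mm/C_\mm$ and contradicting the anti-nilpotency of $(M_i)_\mm$ over $R_\mm$. Hence $M_i/C$ has no nonzero nilpotent Cartier submodule, so $N$ is full; that is, every graded $R[F]$-submodule of $H^i_\mm(R)$ is full. Finally, apply Proposition~\ref{prop graded Lyubeznik filtration} to $W=H^i_\mm(R)$: in the filtration produced there one has $L_j=\langle F^e(N_j)\rangle$, so fullness of the graded submodules $N_j$ forces $N_j=L_j$, all the nilpotent quotients $N_j/L_j$ vanish, and $W$ gets a finite filtration by graded $R[F]$-submodules whose successive quotients are simple $R[F]$-modules with injective Frobenius. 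Such modules are anti-nilpotent and anti-nilpotency is closed under extensions, so $W$ is anti-nilpotent, as desired.

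The step I expect to be the main obstacle is the homogeneity of the non-Cartier-anti-nilpotent locus --- specifically the ascent of Cartier-anti-nilpotency along $A\to A[T]$, which depends on a base-change stability result for anti-nilpotent coherent Cartier modules. The equivalences themselves are comparatively formal, reducing through graded local duality and Proposition~\ref{prop graded Lyubeznik filtration} to tracking nilpotent sub- and quotient modules under Matlis duality.
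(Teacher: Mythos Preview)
Your handling of the equivalences $(1)$--$(3)$ matches the paper's: both combine graded Matlis duality with Proposition~\ref{prop graded Lyubeznik filtration} to reduce $F$-anti-nilpotency of $H^i_\mm(R)$ to the vanishing of the nilpotent graded subquotients in the filtration. Your argument for $(2)\Rightarrow(3)$ is phrased a bit more elaborately (first showing every graded $R[F]$-submodule is full, then deducing $N_j=L_j$), but the content is exactly the paper's observation that $D^i_j/C^i_j=0$ forces $N^i_j/L^i_j=0$ by graded Matlis duality.

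The genuine gap is in your homogeneity argument. The assertion that ``$R_\pp$ is a localization of $R_{\pp^*}[T]$'' is false: for $R=k[x]$ with $\deg x=1$ and $\pp=(x-1)$ one has $\pp^*=(0)$ and $R_{\pp^*}=k(x)$, and every localization of $k(x)[T]$ at a prime has fraction field $k(x,T)$, whereas $R_\pp=k[x]_{(x-1)}$ has fraction field $k(x)$. The correct structural statement involves the degree-zero homogeneous localization $A=R_{(\pp^*)}$: both $R_{\pp^*}$ and $R_\pp$ arise as localizations of $A[t,t^{-1}]$, but at the primes $\mathfrak{n}A[t,t^{-1}]$ and $(\mathfrak{n},g(t))$ respectively, the second strictly containing the first, so $R_\pp$ is not a further localization of $R_{\pp^*}$. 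To pass from ``$R_{\pp^*}$ Cartier-anti-nilpotent'' to ``$R_\pp$ Cartier-anti-nilpotent'' along these lines you would need both a descent step (from $R_{\pp^*}$ down to $A$, say along the faithfully flat inclusion) and the ascent step $A\to A[t]$ that you already flag as uncertain; neither is supplied.

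The paper avoids this detour entirely. It dualizes the graded filtration of Proposition~\ref{prop graded Lyubeznik filtration} to obtain a filtration of each $H^{-i}(\omega_R^\bullet)$ by graded Cartier submodules whose nilpotent subquotients $D^i_j/C^i_j$ are finitely generated graded $R$-modules, and then identifies the non-Cartier-anti-nilpotent locus as $\bigcup_{i,j}\Supp_R(D^i_j/C^i_j)$, which is visibly defined by a homogeneous ideal. This is precisely the direct approach you hint at in your parenthetical remark, and it requires no ascent or descent results for Cartier-anti-nilpotency.
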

\begin{proof}

By Proposition \ref{prop graded Lyubeznik filtration} applied to the graded $R[F]$-modules $H_\mm^i(R)$, we obtain filtrations of graded $R[F]$-modules:
\[
0=L^i_0\subseteq N^i_0 \subseteq L^i_1 \subseteq N^i_1 \subseteq \cdots \subseteq L^i_{n_i} \subseteq N^i_{n_i} =H_\mm^i(R)
\]
with $N^i_j/L^i_{j}$ nilpotent and $L^i_j/N^i_{j-1}$ simple graded $R[F]$-modules. Applying graded Matlis duality yields filtrations of graded Cartier submodules of $H^{-i}(\omega_R^\bullet)$:
$$0=C^i_0\subseteq D^i_0 \subseteq C^i_1 \subseteq D^i_1 \subseteq \cdots \subseteq C^i_{n_i} \subseteq D^i_{n_i} =H^{-i}(\omega_R^\bullet).$$
where $D^i_j/C^i_j$ are Cartier-nilpotent. Since $(\omega_R^\bullet)_P$, appropriately normalized, is a normalized dualizing complex of $R_P$ for every $P\in\Spec(R)$. We obtain that the non-Cartier-anti-nilpotent locus is precisely 
$$\bigcup_{i,j}\Supp_R(D^i_j/C^i_j).$$
In particular, its defining ideal is homogeneous since all $C^i_j$, $D^i_j$ are finitely generated graded modules. For the last assertion, $(1)\Leftrightarrow(2)$ follows immediately from the already established statement, and $(3)\Rightarrow(2)$ always holds. To see $(2)\Rightarrow(3)$, simply notice that if $D^i_j/C^i_j=0$ for all $i,j$ then $N^i_j/L^i_j=0$ by graded Matlis duality, thus $R_\mm$ is $F$-anti-nilpotent.
\end{proof}

\begin{remark}
In Proposition~\ref{GradedCartier}, we do not know whether $(1)-(3)$ implies that $R_P$ is $F$-anti-nilpotent for all $P\in\Spec(R)$. The same issue in establishing the localization property of $F$-anti-nilpotency still occurs: after localization we lost the graded assumption so it is not clear that every $F$-stable submodule of the local cohomology of $R_P$ arises from the Matlis dual of a Cartier submodule of the cohomology of $\omega^\bullet_{R_P}$.   
\end{remark}

\subsection{Herzog ideals and $F$-anti-nilpotency} We can now prove the first result of this paper, namely that homogeneous Herzog ideals define $F$-anti-nilpotent quotients after localizing at the homogeneous maximal ideal.

\begin{theorem}\label{t:HF-anti}
Let $R$ be an $\NN$-graded algebra over an $F$-finite field $k$ of prime characteristic $p$ with homogeneous maximal ideal $\mm$. Write $R=S/I$ where $S$ is an $\NN$-graded polynomial ring over $k$. If $I$ is a Herzog ideal, then $R_{\mm}$ is $F$-anti-nilpotent.
\end{theorem}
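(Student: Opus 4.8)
The plan is to reduce the statement to Cartier-anti-nilpotency via the equivalence established in Proposition~\ref{GradedCartier}, and then to exploit the squarefree Gröbner degeneration to pass from the initial ideal to $I$ itself. More precisely, by Proposition~\ref{GradedCartier}, it suffices to show that $R = S/I$ is Cartier-anti-nilpotent; equivalently, by the last equivalence there, that $R_\mm$ is $F$-anti-nilpotent, so it is enough to prove that each $H^i_\mm(R)$ is an anti-nilpotent graded $R[F]$-module. Since anti-nilpotency is equivalent to every $R[F]$-submodule being full (as recalled after the definition), and since being $F$-full is known to behave well under Gröbner degeneration by the work in \cite{KoVa}, \cite{G-M} (building on \cite{Ma14}), the strategy is to leverage the known $F$-fullness results together with a deformation/semicontinuity argument.

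First I would pass to the initial ideal. Replacing $<$ by a weight order refining it, form the flat family over $\AAA^1$ whose general fiber is $S/I$ and whose special fiber is $S/\init_<(I)$; this is the standard Gröbner degeneration, and homogeneity of $I$ (with respect to possibly a nonstandard $\NN$-grading) ensures the family is graded and the degeneration is compatible with the homogeneous maximal ideal. Since $\init_<(I)$ is squarefree, $S/\init_<(I)$ is a Stanley–Reisner ring, hence reduced and, crucially, $F$-pure (Stanley–Reisner rings are always $F$-split). By \cite[Theorem 1.1]{Ma14}, $S/\init_<(I)$ localized at its homogeneous maximal ideal is $F$-anti-nilpotent, hence Cartier-anti-nilpotent.

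The key step, and the one I expect to be the main obstacle, is to transfer anti-nilpotency from the special fiber of the degeneration back to the general fiber $R = S/I$. Here the natural tool is semicontinuity of the (Cartier-)anti-nilpotent locus in a flat family: one wants the non-Cartier-anti-nilpotent locus to be closed under specialization, so that its being empty on the special fiber forces it to be empty on the general fiber. The graded structure from Proposition~\ref{GradedCartier} — that the defining ideal of the non-Cartier-anti-nilpotent locus is homogeneous and cut out by the supports of the finitely many graded modules $D^i_j/C^i_j$ appearing in the Lyubeznik-type filtration of $H^{-i}(\omega^\bullet)$ — is exactly what makes such a semicontinuity statement tractable: one tracks these filtrations across the family and uses that $C^i_j, D^i_j$ are finitely generated graded modules over the total space. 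I would set up the comparison so that a Cartier-nilpotent quotient on the general fiber would specialize to a nonzero Cartier-nilpotent quotient on the special fiber, contradicting that $S/\init_<(I)$ is Cartier-anti-nilpotent; the delicate point is ensuring flatness is preserved for the relevant local cohomology (or dualizing complex cohomology) modules so that the specialization does not kill the offending submodule. Once this semicontinuity is in place, $R$ is Cartier-anti-nilpotent, and Proposition~\ref{GradedCartier} gives that $R_\mm$ is $F$-anti-nilpotent, completing the proof.
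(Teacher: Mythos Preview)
Your overall architecture matches the paper's: form the Gr\"obner degeneration $S[t]/\widetilde{I}$ with special fiber $S/\init(I)$ and generic fiber $R\otimes_k k(t)$, use that the Stanley--Reisner special fiber is $F$-pure hence $F$-anti-nilpotent, and then transfer to the generic fiber via Cartier-anti-nilpotency and Proposition~\ref{GradedCartier}. The gap is in the transfer step.

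Your proposed ``semicontinuity'' argument---specializing a hypothetical Cartier-nilpotent subquotient on the generic fiber to a nonzero one on the special fiber---does not work as stated, and you already flag the difficulty. The modules $D^i_j/C^i_j$ from Proposition~\ref{GradedCartier} live on the \emph{total space} $S[t]/\widetilde{I}$, not on the fibers, and comparing $H^{-i}(\omega^\bullet_{S[t]/\widetilde{I}})$ at $\mm+(t)$ with $H^{-i}(\omega^\bullet_{S/\init(I)})$ goes through a long exact sequence, not an isomorphism. Knowing that the special fiber is Cartier-anti-nilpotent does \emph{not} directly give that the total space is Cartier-anti-nilpotent at $\mm+(t)$; without that, the homogeneity of the non-Cartier-anti-nilpotent locus of the total space is of no use.

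The paper fills this gap with a precise ingredient you are missing: the deformation theorem \cite[Theorem~4.2]{MaQuyDeformation}, which says that if $A/(x)$ is $F$-anti-nilpotent for a nonzerodivisor $x$, then so is $A$. Applied with $x=t$, this gives that $(S[t]/\widetilde{I})_{\mm+(t)}$ is $F$-anti-nilpotent, hence Cartier-anti-nilpotent. Proposition~\ref{GradedCartier} (applied to the graded total space) then yields that $S[t]/\widetilde{I}$ is Cartier-anti-nilpotent globally; localizing gives that $R\otimes_k k(t)$ is Cartier-anti-nilpotent, and a second application of Proposition~\ref{GradedCartier} shows $(R\otimes_k k(t))_\mm$ is $F$-anti-nilpotent. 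A final easy descent from $k(t)$ to $k$ (which you also omit) finishes the argument. Your invocations of \cite{KoVa} and \cite{G-M} for $F$-fullness under degeneration are not what is needed here; the deformation theorem does all the work.
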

\begin{proof}
Let us choose a monomial order on $S$ so that $\init(I)$ is squarefree. By \cite[15.3]{EisenbudBook}, we can find a homogeneous ideal $\widetilde{I}\subseteq S[t]$ so that 
$$S[t]/(\widetilde{I}, t)\cong S/\init(I) \text{ and } (S[t]/\widetilde{I})\otimes_{k[t]}k(t)\cong R\otimes_kk(t).$$

Since $\init(I)$ is squarefree, $S/\init(I)$ is $F$-pure and thus $(S/\init(I))_\mm$ is $F$-anti-nilpotent by \cite[Theorem 3.8]{Ma14}. It follows from \cite[Theorem 4.2]{MaQuyDeformation} that $(S[t]/\widetilde{I})_{\mm+(t)}$ is $F$-anti-nilpotent and thus Cartier-anti-nilpotent. Hence $S[t]/\widetilde{I}$ is Cartier-anti-nilpotent by Proposition~\ref{GradedCartier}. But then $(S[t]/\widetilde{I})\otimes_{k[t]}k(t)\cong R\otimes_kk(t)$ is Cartier-anti-nilpotent as this property is stable under localization. It follows that $(R\otimes_kk(t))_\mm$ is $F$-anti-nilpotent by Proposition~\ref{GradedCartier}. Since $H_\mm^i(R_\mm)\otimes_kk(t)\cong H_\mm^i((R\otimes_kk(t))_\mm)$, it is easy to see that the Frobenius action is anti-nilpotent on $H_\mm^i(R_\mm)$ (as this is true after base change along $k\to k(t)$), thus $R_\mm$ is $F$-anti-nilpotent.
\end{proof}


\begin{cor}
Let $R$ be an $\NN$-graded Algebra with Straightening Law (ASL) over a field $k$ of prime characteristic $p$. Then $R_{\mm}$ is $F$-anti-nilpotent, where $\mm=R_+$.
\end{cor}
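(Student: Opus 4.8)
The plan is to deduce this corollary directly from Theorem~\ref{t:HF-anti}, by reducing the claim that $R$ is defined by a Herzog ideal to the well-known straightening relations of an ASL. First I would recall the setup: an $\NN$-graded ASL $R$ on a finite poset $H$ over $k$ is a quotient $S = k[X_h : h \in H] \twoheadrightarrow R$ whose kernel $I$ is generated by the straightening relations, one for each incomparable pair $h_1, h_2 \in H$, of the form $X_{h_1}X_{h_2} - \sum_\lambda c_\lambda M_\lambda$, where each $M_\lambda$ is a standard monomial (a product of variables indexed by a chain in $H$) and $c_\lambda \in k$. Here the $X_h$ may carry arbitrary positive degrees, so $R$ is $\NN$-graded but not necessarily standard graded; this is exactly the generality in which Theorem~\ref{t:HF-anti} is stated, so it causes no trouble.

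The key step is to exhibit a monomial order $<$ on $S$ for which $\init_<(I)$ is squarefree, so that $I$ is a Herzog ideal. The standard fact (due essentially to the theory of ASLs; see for instance Eisenbud's ``Introduction to algebras with straightening laws'' or the account in Bruns--Vetter) is that if one chooses any linear extension of the partial order on $H$ to a total order, and takes the induced degree-reverse-lexicographic (or simply a ``diagonal'') monomial order in which, for an incomparable pair, the leading term of the straightening relation is $X_{h_1}X_{h_2}$, then the straightening relations form a Gr\"obner basis of $I$. Consequently $\init_<(I)$ is generated by the squarefree monomials $X_{h_1}X_{h_2}$ over all incomparable pairs $h_1, h_2$; in other words $\init_<(I)$ is the Stanley--Reisner ideal of the order complex of $H$, which is manifestly squarefree. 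Thus $I$ is a Herzog ideal. Applying Theorem~\ref{t:HF-anti} to $R = S/I$ then gives that $R_\mm$ is $F$-anti-nilpotent, with $\mm = R_+$ the homogeneous maximal ideal.

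The only genuine point requiring care is the Gr\"obner basis claim: one must check that reducing the $S$-polynomials of pairs of straightening relations to zero follows from the ASL axioms, in particular from the requirement that the expression of a non-standard monomial in terms of standard ones is unique and compatible with multiplication. I expect this to be the main ``obstacle,'' but in fact it is entirely classical and can simply be cited rather than reproved; the confluence of the straightening process is exactly what the ASL axioms guarantee. No positive-characteristic input is needed at this stage — the characteristic only enters through Theorem~\ref{t:HF-anti}. One should also note that the field $k$ need not be assumed $F$-finite a priori in the statement of the corollary, but since $F$-anti-nilpotency of $R_\mm$ is the conclusion and the theorem requires $F$-finiteness, either one assumes $k$ is $F$-finite or one observes that the relevant reduction (e.g., passing to $k^{1/p^\infty}$ or to the $F$-finite case) is harmless; the cleanest route is to simply state the corollary with the $F$-finiteness hypothesis inherited from the theorem, or remark that ASLs behave well under base field extension so the general case reduces to the $F$-finite one.
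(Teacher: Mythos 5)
Your proposal matches the paper's proof essentially verbatim: both observe that the straightening relations form a Gr\"obner basis for a suitable (DegRevLex) monomial order extending the poset order, so that $\init(I)$ is the Stanley--Reisner ideal of the order complex and hence squarefree, and then invoke Theorem~\ref{t:HF-anti}. Your remark about the missing $F$-finiteness hypothesis is a fair observation --- the corollary as printed drops the $F$-finiteness assumption from the theorem, and one does need either to carry it along or to make the (routine but nontrivial) base-change argument you sketch.
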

\begin{proof}
We have that $R=S/I$ where $S$ is an $\NN$-graded polynomial ring over $k$, and $\init(I)$ is squarefree for any DegRevLex monomial order extending the partial order on the variables given by the poset governing the ASL structure of $R$. So the result follows by Theoerem~\ref{t:HF-anti}.
\end{proof}

\section{Herzog ideals versus $F$-purity for all $p\gg 0$}

Throughout this section any polynomial ring $S=A[X_0.\dots ,X_n]$ over a commutative ring $A$ will be standard graded, i.e., $A=S_0$ and $\deg(X_i)=1$ for all $i=0,\dots ,n$. In this section we study Question~\ref{q:genovaCY}. We first investigate the case of nonsingular curves. 


\begin{proposition}\label{p:curves}
Let $I\subseteq \QQ[X_0,\dots,X_n]$ be a homogeneous ideal such that $I_\CC$ defines a projective (connected) nonsingular curve $C\subseteq \PP^n_{\CC}$ containing at least one $\QQ$-rational point $P\in C$. 
Then the following conditions are equivalent:
\begin{enumerate}
\item $\FF_p[X_0,\dots,X_n]/I_p$ is $F$-pure for all primes $p\gg 0$.
\item Possibly after a change of variables, $I_\CC\subseteq \CC[X_0,\dots,X_n]$ is a Herzog ideal.
\end{enumerate}
\end{proposition}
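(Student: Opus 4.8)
\emph{Setup and the easy directions.} We may replace $I$ by its saturation; since $C$ is smooth and connected, $I$ is then radical and $R:=\QQ[X_0,\dots,X_n]/I$ is a $2$-dimensional standard graded domain with $\Proj(R)=C$, and we write $R_p:=\FF_p[X_0,\dots,X_n]/I_p$. The strategy is to prove that each of $(1)$ and $(2)$ is equivalent to: \emph{$C$ is, after a change of coordinates, a rational normal curve}. Granting this, the $\QQ$-rational point forces $C\cong\PP^1_\QQ$ embedded linearly normally, so $I$ is, up to a $\QQ$-linear change of coordinates, the ideal of $2\times 2$ minors of a $2\times n$ catalecticant matrix of linear forms; such an ideal is a Cartwright--Sturmfels (equivalently ASL) ideal, hence Herzog, which gives $(2)$, while $\FF_p[X_0,\dots,X_n]/I_p$ is then a Veronese subalgebra of $\FF_p[s,t]$, hence $F$-regular and in particular $F$-pure for every $p$, which gives $(1)$. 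So the work is in the two reverse implications.

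\emph{A common reduction.} Both $(1)$ and $(2)$ imply that $(R_p)_\mm$ is $F$-anti-nilpotent for all $p\gg 0$: under $(1)$ this follows from \cite[Theorem 1.1]{Ma14} after localizing, and under $(2)$ a squarefree Gr\"obner degeneration of $I_\CC$ spreads out (a change of coordinates witnessing $(2)$ is defined over a number field, and after base change the squarefree initial ideal persists in all residue characteristics $p\gg 0$), so $I_p$ is Herzog for $p\gg 0$ and Theorem~\ref{t:HF-anti} applies. Thus it suffices to show that if $(R_p)_\mm$ is $F$-anti-nilpotent for all $p\gg 0$, then $C$ is a rational normal curve. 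We use throughout the identifications $H^2_\mm(R_p)_m\cong H^1(C_p,\mathcal O_{C_p}(m))$ and $H^1_\mm(R_p)_m\cong\operatorname{coker}\bigl((R_p)_m\to H^0(C_p,\mathcal O_{C_p}(m))\bigr)$, compatible with Frobenius --- so that Frobenius on $H^2_\mm(R_p)_0=H^1(C_p,\mathcal O_{C_p})$ is the Hasse--Witt operator --- together with the fact that Castelnuovo--Mumford regularity is constant under reduction mod $p\gg 0$.

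\emph{Arithmetic Cohen--Macaulayness, then genus $0$.} Since $H^1_\mm(R_p)_m=0$ for $m\le 0$ and for $m$ beyond the (fixed) regularity of $I$, the module $H^1_\mm(R_p)$ is supported in a fixed finite range of strictly positive degrees, so a Frobenius iterate kills it; were it nonzero this would contradict $F$-fullness. Hence $H^1_\mm(R)=0$, so $R$ is a $2$-dimensional normal Cohen--Macaulay graded domain equal to $\bigoplus_{m\ge 0}H^0(C,\mathcal O_C(m))$. Now $F$-anti-nilpotency forces $F$-injectivity of $(R_p)_\mm$, hence injectivity of $F:H^2_\mm(R_p)_m\to H^2_\mm(R_p)_{pm}$ for all $m\le 0$; for $m=0$ this says $C_p$ is ordinary, for all $p\gg 0$. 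Whenever the $a$-invariant $a(R)=\max\{m:H^1(C,\mathcal O_C(m))\ne 0\}$ is positive (e.g.\ for canonical curves and other special embeddings), $(R_p)_\mm$ is not $F$-injective, since the Cohen--Macaulayness of $R$ means Frobenius annihilates the top-degree component of $H^2_\mm$; so such $C$ are excluded. If $g=1$, the remaining case contradicts Elkies' theorem that an elliptic curve over $\QQ$ has infinitely many supersingular primes. Once genus $0$ is established, $C$ is linearly normal by the previous step, hence a rational normal curve, and the argument closes as in the first paragraph.

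\emph{The main obstacle.} What remains, and is the crux, is the case $g\ge 2$ with $a(R)=0$ (curves embedded by sufficiently positive line bundles), where $F$-injectivity and $F$-fullness alone impose no obstruction. Here the plan is to show $(R_p)_\mm$ is nevertheless not $F$-anti-nilpotent for $p\gg 0$ by exhibiting an $F$-stable submodule $N\subseteq H^2_\mm(R_p)$ on which the induced Frobenius fails to be injective --- equivalently, via graded Matlis duality, a Cartier submodule of $\omega_{R_p}=\bigoplus_{m\ge 0}H^0(C_p,\omega_{C_p}(m))$ whose Cartier operator is not surjective --- exploiting that for $g\ge 2$ the space $H^0(C_p,\omega_{C_p})$ has dimension $\ge 2$ and that Frobenius on the negatively twisted pieces $H^1(C_p,\mathcal O_{C_p}(m))$ with $m<0$ maps into targets of strictly larger dimension, so that fullness of \emph{every} $F$-stable submodule is very restrictive; alternatively one may invoke that a curve of genus $\ge 2$ over $\QQ$ cannot have ordinary reduction at every $p\gg 0$. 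Either route excludes $g\ge 2$ and completes the proof. The other ingredients --- the reduction through Theorem~\ref{t:HF-anti} and spreading out, the cohomology/local-cohomology dictionary, the $a$-invariant and Hasse--Witt obstructions, and recognizing rational normal curves over $\QQ$ --- are comparatively routine.
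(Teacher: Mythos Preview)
Your ``common reduction'' through $F$-anti-nilpotency is elegant but creates a difficulty you do not resolve. The crux, as you yourself flag, is ruling out $g\ge 2$ when $a(R)=0$ under the hypothesis that $(R_p)_\mm$ is merely $F$-anti-nilpotent for all $p\gg 0$. Your route~2---``a curve of genus $\ge 2$ over $\QQ$ cannot have ordinary reduction at every $p\gg 0$''---is not a theorem: Elkies' result is special to elliptic curves, and for Jacobians of dimension $\ge 2$ the existence of infinitely many non-ordinary primes is a well-known open problem. Your route~1 is only a heuristic: the observation that $F$ sends $H^1(C_p,\mathcal O_{C_p}(m))$ into a larger space for $m<0$ does not by itself produce an $F$-stable submodule on whose quotient Frobenius fails to be injective, and for an ordinary curve embedded by a sufficiently positive line bundle there is no evident obstruction to anti-nilpotency of $H^2_\mm(R_p)$. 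So neither route closes the argument.

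The paper sidesteps this entirely by treating the two directions asymmetrically rather than via a common reduction. For $(2)\Rightarrow(1)$ it never passes through $F$-anti-nilpotency: it invokes the fact (from \cite{HTVW}) that a \emph{nonsingular} projective curve whose ideal is Herzog is automatically a rational normal curve, whence $(1)$ follows because the coordinate ring is a Veronese direct summand. For $(1)\Rightarrow(2)$ it uses the full strength of $F$-\emph{purity} of the cone (not just $F$-anti-nilpotency): $F$-splitness of the affine cone forces $H^0(C,\omega_C^{-1})\neq 0$ (see \cite[Remark~1.2.10]{BrionKumarFsplittingBook}), hence $g\le 1$ immediately; the genus~$1$ case is then eliminated by Elkies exactly as you do. The projective normality step and the recognition of the rational normal curve are as in your sketch. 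Thus the missing idea is that the two hypotheses $(1)$ and $(2)$ each carry strictly more information than ``$F$-anti-nilpotent for all $p\gg 0$'', and the paper exploits that extra information directly rather than discarding it.
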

\begin{proof}
$(2)\implies (1)$. Since $C\subseteq \PP^n$ is nonsingular, it must be a rational normal curve by \cite{HTVW}, so (1) follows because $\FF_p[X_0,\dots,X_n]/I_p$ is, for $p\gg 0$, a direct summand of $\overline{\FF_p}[X^iY^{n-i}:0\leq i\leq j\leq n]$, which is in turn a direct summand of $\overline{\FF_p}[X,Y]$ that is obviously $F$-pure.

$(1)\implies (2)$. Our assumption guarantees that $C$ has genus at most 1 (see \cite[Remark 1.2.10]{BrionKumarFsplittingBook}) and the embedding $C\subseteq \PP^n$ must be projectively normal. In fact, it is well-known that $H_{\mm}^1(R)_{\leq 0}=0$, for example see \cite[Proof of Theorem 5.9]{DaoMaVarbaro}, while $(1)$ guarantees that $H_{\mm}^1(R)_{>0}=0$. Assume by contradiction that $C$ has genus $1$. Since it has a $\QQ$-rational point, $C$ is then an elliptic curve over $\QQ$. By \cite{ElkiesSupersingular}, there are then infinitely many supersingular primes $p$, and the homogeneous coordinate ring $\FF_p[X_0,\dots,X_n]/I_p$ is thus not $F$-pure for infinitely many primes $p$ by \cite[Remark 1.3.9]{BrionKumarFsplittingBook}. This contradicts $(1)$. So $C$ has genus $0$, and therefore $C\subseteq \PP^n$ is a rational normal curve of degree $d \leq n$. So after a change of variables $I_\CC$ is the ideal of $2$-minors of the matrix
\[U=\begin{pmatrix}
X_0 &X_1 & \dots & X_{d-1} \\
X_1 &X_2 & \dots & X_d
\end{pmatrix},\]
and $\init(I_2(U))=(X_iX_j: 1\leq i+1<j\leq d)$ for LEX $X_0>X_1>\dots >X_n$.   
\end{proof}

\begin{remark}
\label{r:Herzog->open F-pure}
\begin{itemize}
  \item We do not know whether the implication ``$(2)\implies (1)$" in Question~\ref{q:genovaCY} holds when $I_{\CC}$ defines a singular projective curve. 
  \item On the other hand, ``$(2)\implies (1)$" does not hold already if $I_\CC$ defines a projective surface. For example, the ideal 
$$I=(XY,XZ,Y(ZU-W^2))\subseteq \QQ[X,Y,Z,U,W]$$ 
is such that $R_p=\FF_p[X,Y,Z,U,W]/I_p$ is not $F$-pure for any prime $p$, however we have $\init(I_\CC)=(XY,XZ,YZU)$ for LEX with $X>Y>Z>U>W$. To see that $R_p$ is not $F$-pure one can argue as follows: if $u=\overline{U}\in R_p$, the localization $(R_p)_u$ is isomorphic to $A_p[U,U^{-1}]$ where $A_p=\FF_p[X,Y,Z,W]/(XY,XZ,Y(Z-W^2))$ (e.g., using \cite[Proposition 1.5.18]{BH1993}). If $R_p$ is $F$-pure so is $A_p$; however, $A_p$ is not $F$-pure for any prime integer $p$ by \cite[Example 3.2]{Anurag99}.
  \item It is well-known that ``$(2)\implies (1)$" in Question~\ref{q:genovaCY} holds when $\CC[X_0,\dots,X_n]/I_\CC$ is Gorenstein. Indeed, if $I_\CC\subseteq \CC[X_0,\dots,X_n]$ is a Herzog ideal after a change of coordinates, for all $p\gg 0$ there exists a field $k$ of characteristic $p$ and a change of variables so that $I_k\subseteq k[X_0,\dots,X_n]$ becomes a Herzog ideal. In particular, $k[X_0,\dots,X_n]/I_k$ is $F$-injective. However, for $p\gg 0$ we have that $k[X_0,\dots,X_n]/I_k$ is also Gorenstein, and hence $F$-pure. Therefore $\FF_p[X_0,\dots,X_n]/I_p$ is $F$-pure as well.
\end{itemize}
\end{remark}

We are not aware of any counterexample to ``$(1)\implies (2)$'' in Question~\ref{q:genovaCY}. Note that, if $f\in \QQ[X_0,\dots,X_n]$ is a homogeneous polynomial of degree $d\leq n$ defining a nonsingular hypersurface $X\subseteq \PP^n_\CC$, then $\FF_p[X_0,\dots,X_n]/(\overline{f})$ is $F$-pure for all $p\gg 0$ by \cite[Theorem 2.5]{Fed83}. 
As a first example, consider $f=X^3+Y^3+Z^3+W^3\in \QQ[X,Y,Z,W]$ and $I=(f)$. Then $I_p\subseteq \FF_p[X,Y,Z,W]/(X^3+Y^3+Z^3+W^3)$ defines an $F$-pure quotient for all primes $p>3$. In this case, if $g\in\Aut(\CC[X,Y,Z,W])$ is defined as $g(X)=X+Y+Z$, $g(Y)=-(X+Y)$, $g(Z)=-(X+Z)$, and $g(W)=X+W$, then it turns out that 
\[g(f)=6XYZ+3Y^2Z+3YZ^2+3X^2W+3XW^2+W^3,\]
so $\init(g(f))=XYZ$ with respect to DegRevLex with $X>Y>Z>W$.

We next show that this is indeed a property common to all klt cubic hypersurfaces.

\begin{theorem}
\label{t:genova}
Let $f\in \QQ[X_0,\dots,X_n]$ be a homogeneous polynomial of degree $d \leq 3$. Set $I=(f)$ and $H=\Proj \CC[X_0,\dots,X_n]/I_\mathbb{C}$. Assume either $H$ has klt singularities\footnote{This condition implies that $\CC[X_0,\dots,X_n]/I_\mathbb{C}$ is klt when $n\geq 3$ by \cite{KSSW} or \cite{WatanabeRationalSingularities} (for hypersurfaces, klt is equivalent to rational singularities) and thus is $F$-pure for all primes $p\gg 0$ by \cite[Theorem 5.2]{HaraRationalSingularities}.} or $H$ is a curve, then the following conditions are equivalent:
\begin{enumerate}
\item $\FF_p[X_0,\dots,X_n]/I_p$ is $F$-pure for all primes $p\gg 0$.
\item Possibly after a change of variables, $I_\CC\subseteq \CC[X_0,\dots,X_n]$ is a Herzog ideal.
\end{enumerate}
\end{theorem}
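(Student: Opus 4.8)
The plan is to handle the two cases — $H$ klt and $H$ a curve — essentially separately, after first disposing of the trivial implication. For $(2)\implies(1)$: if $I_\CC$ is a Herzog ideal after a change of coordinates, then for $p\gg 0$ one gets a field $k$ of characteristic $p$ and a change of variables making $I_k$ a Herzog ideal, so $k[X_0,\dots,X_n]/I_k$ is $F$-injective (and in fact $F$-full) by the results of \cite{KoVa}; since $H$ (and hence the affine cone, away from the vertex) is klt by hypothesis in the first case, for $p\gg 0$ the cone is Gorenstein with rational (hence, by \cite{HaraRationalSingularities}, $F$-rational) punctured spectrum, so an $F$-injective Gorenstein ring is $F$-pure, which descends to give $(1)$. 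When $H$ is a curve, $(2)\implies(1)$ is exactly \emph{Proposition~\ref{p:curves}}, since a hypersurface curve with a Herzog degeneration has arithmetically Cohen--Macaulay (indeed Gorenstein) homogeneous coordinate ring, so one can again invoke the Gorenstein bullet of Remark~\ref{r:Herzog->open F-pure}, or directly the curve argument. So the content is all in $(1)\implies(2)$.

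For $(1)\implies(2)$ I would split on the degree $d\in\{1,2,3\}$. Degrees $1$ and $2$ are immediate: a linear form is a variable up to coordinates, and a quadric of rank $r$ is $X_0X_1+X_2X_3+\cdots$ or $X_0X_1+\cdots+X_{2m}^2$ after a linear change, both of which have squarefree initial ideal (the diagonal term) for a suitable order — here one never even needs the $F$-purity hypothesis, so $(1)$ is automatic and $(2)$ holds unconditionally. So assume $d=3$. If $H$ is a curve, then $n=2$ and $H$ is a plane cubic; condition $(1)$ forces the cone $\FF_p[X_0,X_1,X_2]/(f_p)$ to be $F$-pure for $p\gg 0$, which rules out $H$ being a smooth elliptic curve (supersingular reduction infinitely often, by Elkies, as in the proof of Proposition~\ref{p:curves}) and rules out a cuspidal cubic (whose cone is never $F$-pure — the cusp singularity $x^2=y^3$ is not $F$-split, and $F$-purity localizes). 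What survives is: a smooth conic plus a transverse line, a conic plus a tangent line, three general lines, a line plus a double line, a triple line, a nodal cubic, and the cone over a triangle; in each surviving case one writes down an explicit linear change of coordinates putting $f$ into a form with a squarefree leading monomial (e.g. $XYZ$ for the triangle and the three-general-lines case; $XY^2$ or the like otherwise) — this is a short finite casework. When $H$ is klt (and not a curve, so $n\ge 3$): $H$ is a klt cubic hypersurface in $\PP^n$, hence by classification these are either smooth or have only $A_k$-type (or mild) singularities; the smooth case is handled by the Proposition in the \texttt{comment} block (or directly: a smooth cubic admits a point where, after translating to the chart and choosing coordinates adapted to the tangent cone, the lowest-degree part of the dehomogenization is $X_0X_1X_2$ up to a linear change), and the singular-but-klt cases reduce to the same type of normal-form computation for the finitely many deformation classes. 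In all cases the conclusion is an explicit $\phi\in\Aut(\CC[X_0,\dots,X_n])$ with $\init(\phi(f))$ squarefree for DegRevLex, giving $(2)$.

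The step I expect to be the main obstacle is the $d=3$, $H$-klt case: showing that \emph{every} klt (equivalently, for a cubic hypersurface, rational-singularity) cubic form can be brought, by a linear change of coordinates, to one with a squarefree initial monomial. This requires knowing the classification of singular klt cubic hypersurfaces well enough to produce a uniform normal form — one wants to argue that klt forces the singular locus to be small and of controlled type, then exhibit the coordinate change. A clean way to package this is: $(1)$ together with klt makes the cone $F$-pure for $p\gg 0$, so it is in particular $F$-injective; but what we actually need is the sharper \emph{geometric} statement that the cubic can be degenerated to a squarefree monomial ideal, which is a statement about the $\CC$-geometry of $H$, not about $F$-purity per se. So the real work is a classification-plus-normal-form argument for cubic hypersurfaces with rational singularities, and the $F$-purity hypothesis $(1)$ enters only to exclude the handful of cases (smooth elliptic plane cubic, cuspidal cubic) where no squarefree degeneration exists and where indeed $(1)$ fails.
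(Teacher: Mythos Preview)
Your overall architecture matches the paper's: dispose of $(2)\Rightarrow(1)$ via the Gorenstein observation, then for $(1)\Rightarrow(2)$ split on $d$, and for $d=3$ split into the curve case and the klt case. But the execution of both $d=3$ branches has genuine gaps.

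\textbf{Curve case.} Your casework is both incorrect and unnecessary. Several items on your ``surviving'' list do \emph{not} survive condition~$(1)$: a triple line and a line plus a double line are non-reduced (hence never $F$-pure), and a conic plus a tangent line has a tacnode whose local ring is not $F$-pure. Worse, you suggest $XY^2$ as a target initial monomial, which is not squarefree. The paper avoids this entirely: once Elkies rules out the smooth case, move a singular point to $[1:0:0]$ and write $f=Xq(Y,Z)+g(Y,Z)$ with $q$ a binary quadric. Up to a change in $Y,Z$ one has $q\in\{0,\,Y^2,\,YZ\}$; in the last case $\init(f)=XYZ$ for LEX, while in the first two cases $f^{p-1}$ never contains $(XYZ)^{p-1}$ in its support, so Fedder's criterion kills $(1)$ for \emph{every} $p$. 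This is a three-line argument replacing your seven-case enumeration.

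\textbf{klt case.} Here you are missing the main idea. You propose to handle $n\geq 3$ by ``the finitely many deformation classes'' of klt cubics, but for $n\geq 3$ the moduli of smooth cubic hypersurfaces already has positive dimension, so there is no finite classification to invoke. The paper's device is a \emph{Bertini reduction}: for $n>3$, a general hyperplane section $H'\subseteq\PP^{n-1}$ of $H$ is still klt (\cite[Lemma 5.17]{KollarMori}), so after a change of variables $f=f'+X_nf''$ with $f'$ defining a klt cubic in $\PP^{n-1}$; induction gives a change making $\init(f')=X_0X_1X_2$, and extending by $X_n\mapsto X_n$ with DegRevLex in $X_n$ last finishes. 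This collapses everything to $n=3$. There, the paper uses two specific tools you do not name: for \emph{smooth} cubic surfaces, the Cayley--Salmon form $f=l_1l_2l_3-m_1m_2m_3$, from which one extracts a basis $l_1,l_2,l_3,m_j$ and gets $\init(\phi(f))=X_0X_1X_2$ for DegRevLex; for \emph{singular} klt cubic surfaces, the explicit normal forms in Sakamaki's classification \cite{SakamakiCubicSurfaces}, almost all of which visibly have a squarefree initial term, with three exceptional equations handled by explicit coordinate changes. Your sketch (``point where the tangent cone is $X_0X_1X_2$'', ``finitely many deformation classes'') does not supply either of these, and without the Bertini step there is no inductive structure at all.
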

\begin{proof}
First of all, since $\CC[X_0,\dots,X_n]/I_\mathbb{C}$ is a hypersurface and in particular Gorenstein, $(2)\implies (1)$ is a special case of the last item of Remark~\ref{r:Herzog->open F-pure}. We thus focus on $(1)\implies (2)$. If $d=1$ the conclusion is trivial, and if $d=2$ it immediately follows by the classification of quadrics. In what follows we assume that $d=3$.

In the case that $H$ is a curve, then \cite[Theorem 2]{ElkiesSupersingular} (and the assumption (1)) guarantees that $H$ is necessarily a singular projective curve in $\PP^2_\CC$. We can assume that $P=[1:0:0]$ is a singular point of $H$, that is, we can do a change of coordinates $\alpha\in\Aut(\CC[X,Y,Z])$ such that $X^3\notin \supp(\alpha(f))$ and 
$$\frac{\partial (\alpha(f))}{\partial X}(P)=\frac{\partial (\alpha(f))}{\partial Y}(P)=\frac{\partial (\alpha(f))}{\partial Z}(P)=0.$$ This means that $\alpha(f)=Xq+g$, where $q$ is a quadric of $\CC[Y,Z]$ and $g$ is a cubic of $\CC[Y,Z]$. As before, there exists a change of coordinates $\beta\in\Aut(\CC[Y,Z])$ such that $\beta(q)=0$, $\beta(q)=Y^2$ or $\beta(q)=YZ$: in the third case $\init(\beta\circ \alpha(f))=XYZ$ for LEX with respect to $X>Y>Z$; in the second and first case, it is easy to see that $f^{p-1}$ cannot contain $(XYZ)^{p-1}$ in its support thus by Fedder's criterion $\FF_p[X,Y,Z]/(\overline{f})$ would not be $F$-pure and thus contradicting $(1)$. 

Now we assume that $H$ has klt singularities, and we may assume that $n\geq 3$ (if $n=2$ then $H$ is a curve and we have already established this case).  

First we assume $n=3$: in this case $H\subseteq \PP^3$ is a cubic surface. If $H$ is nonsingular, then there exist linear forms $l_i,m_i\in S=\CC[X_0,X_1,X_2,X_3]$ where $i=1,2,3$ such that 
\[f=l_1l_2l_3-m_1m_2m_3.\] 
This goes back to the 19th century and is known as Cayley-Salmon equation, see \cite{CayleySalmon} for a modern treatment. Any triple of the linear forms $l_i,m_i$ for $i=1,2,3$ are linearly independent (see \cite[Section 2.3]{CayleySalmon}) and $(l_i,m_i:i=1,2,3)=(X_0,X_1,X_2,X_3)$ (otherwise the point corresponding to $(l_i,m_i:i=1,2,3)$ would be a singular point of $H$). Hence $l_1,l_2,l_3$ and one of the $m_i$, say $m_1$, are linearly independent, so there exists a change of variables $\phi \in \Aut(S)$ such that
\[\phi(f)=X_0X_1X_2-X_3\phi(m_2)\phi(m_3).\]
In particular $\init(\phi(f))=X_0X_1X_2$ for DegRevLex with $X_0>X_1>X_2 >X_3$.

If $H$ is a singular cubic surface, then the defining equations of such $H$ are classified by \cite[Table 2 and Theorem 2]{SakamakiCubicSurfaces} (all but the last equation define klt hypersurface $H$). From those explicit equations, it is clear that one can choose a monomial order so that the initial term is squarefree except the following three cases:
\begin{enumerate}
  \item $f=X_3X_0^2 + X_1^3 +X_2^3$
  \item $f=X_3X_0^2 + X_0X_2^2 + X_1^2X_2$
  \item $f=X_3X_0^2 + X_0X_2^2 +X_1^3$
\end{enumerate}
We now tackle these three cases by hand. In case $(1)$, we consider the change of variables $\phi\in\Aut(\CC[X_0,X_1,X_2,X_3])$ such that $\phi(X_0)=X_0+X_1$, $\phi(X_1)=-(X_1+X_3)$, $\phi(X_2)=X_1+X_2$, $\phi(X_3)=3X_3$. A straightforward computation shows that 
$$\phi(f)=6X_0X_1X_3 + 3X_0^2X_3 -3X_1X_3^2 -X_3^3 + 3X_1^2X_2 +3X_1X_2^2 +X_2^3$$
and it is clear that $\init(\phi(f))=X_0X_1X_3$ for DegRevLex with $X_1>X_0>X_3>X_2$. One can find a similar change of variables in case $(2)$ and case $(3)$, but for these two cases, one can alternatively consider the following two equations:
\begin{enumerate}
  \item[(2')] $f=X_3X_0^2 + X_0X_2^2 + X_1^2X_2 + X_0X_1X_2$
  \item[(3')] $f=X_3X_0^2 + X_0X_2^2 +X_1^3 + X_0X_1X_2$
\end{enumerate}
The equation in $(2')$ has a unique $D_5$-singularity while the equation in $(3')$ has a unique $E_6$-singularity. Therefore, the hypersurfaces defined by the equations in $(2')$ and $(3')$ must be isomorphic to the hypersurfaces defined by the equations in $(2)$ and $(3)$ respectively by \cite[Theorem 2]{SakamakiCubicSurfaces}. It is left to observe that $\init(f)=X_0X_1X_2$ for DegRevLex $X_0>X_1>X_2>X_3$ in case $(2')$ and for the monomial order by first declaring DegRevLex with respect to $X_3$ and then using LEX with $X_0>X_1>X_2$.

Finally, if $n>3$, by Bertini's theorem (see \cite[Lemma 5.17]{KollarMori}), there exists a hyperplane section $H'\subseteq \PP^{n-1}$ of $H\subseteq \PP^n$ so that $H'$ still has klt singularities. After a change of variables $\alpha\in \Aut( \CC[X_0,\dots,X_n])$, we can assume that the hyperplane is $\{X_n=0\}$; in other words,
\[\alpha(f)=f'+X_nf''\]
where $f''\in S$ and $f'\in S'=\CC[X_0,\ldots ,X_{n-1}]$ is a homogeneous polynomial of degree 3 defining a klt hypersurface $H'\subseteq \PP^{n-1}$. By induction on $n$, there exists a change of variables $\phi'\in\Aut(S')$ such that $\init(\phi'(f'))=X_0X_1X_2$ for some monomial order in $X_0, X_1, \dots, X_{n-1}$. Extending $\phi'$ to $\beta\in\Aut(S)$ by putting $\beta(X_n)=X_n$, and defining $\phi=\beta\circ \alpha\in\Aut(S)$, we have $\init(\phi(f))=X_0X_1X_2$ where we extend the monomial order from $S'$ to $S$ via DegRevLex with respect to $X_n$.
\end{proof}



\begin{remark}
We do not know the implication ``$(1)\implies (2)$" in Question~\ref{q:genovaCY} in general even when $I=(f)$ is a principal ideal defining a nonsingular hypersurface in $\PP^n$ when $\deg(f)>3$. Note that an affirmative answer in this case would imply that, if $I_\CC$ defines a nonsingular Calabi-Yau hypersurface $X\subseteq \PP^n$, then there are infinitely many primes $p$ such that $\FF_p[X_0,\dots,X_n]/I_p$ is not $F$-pure: indeed, it is easy to check that if $f\in \CC[X_0,\dots,X_n]$ is a homogeneous polynomial of degree $n+1$ so that the initial term of $f$ is squarefree for some monomial order, then the hypersurface of $\PP^n$ defined by $f$ is singular. 

More generally, an affirmative answer to the implication ``$(1)\implies (2)$" in Question~\ref{q:genovaCY} in the nonsingular case, together with a solution of \cite[Conjecture 2]{CDV} would imply that, if $I_\CC$ defines a nonsingular Calabi-Yau variety of $X\subseteq \PP^n$, then there are infinitely many primes $p$ such that $\FF_p[X_0,\dots,X_n]/I_p$ is not $F$-pure. 
\end{remark}

\section{Annihilators of $F$-stable submodules and uniformly compatible ideals}

In this section we prove some results regarding annihilators of $F$-stable submodules and subquotients of local cohomology modules. Our first result is a generalization of the main result of \cite{Ma14} to not necessarily $F$-pure rings. We recall that the trace ideal of an $R$-module $M$ is the ideal $\sum_{\phi\in\Hom_R(M,R)}\phi(M)\subseteq R$.

\begin{theorem}\label{t:F-pure->anti}
Let $(R,\mm)$ be an $F$-finite local ring of prime characteristic $p$. Then there exists $e>0$ so that the trace ideal $J_e$ of $F_*^eR$ annihilates every $F$-stable subquotient of $H_{\mm}^i(R)$ that is nilpotent. 

In particular, if $R$ is $F$-pure, then the Frobenius action on $H_{\mm}^i(R)$ is anti-nilpotent, i.e., $R$ is $F$-anti-nilpotent.
\end{theorem}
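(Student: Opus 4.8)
The plan is to reduce the statement about arbitrary nilpotent $F$-stable subquotients to a single ``uniform'' statement about the top iterate of Frobenius killing all nilpotent pieces, using the Noetherianity of the lattice of $F$-stable submodules of the Artinian modules $H^i_\mm(R)$. First I would recall that each $H^i_\mm(R)$ is an Artinian $R[F]$-module, so by a result of Hartshorne--Speiser / Lyubeznik (in the $F$-finite setting), the nilpotent part $N^i := \{x \in H^i_\mm(R) : F^e x = 0 \text{ for some } e\}$ is an $F$-stable submodule on which $F$ acts nilpotently, i.e.\ there is a single $e_i$ with $F^{e_i}(N^i) = 0$. Setting $e_0 = \max_i e_i$ (the sum over $i$ ranges over a finite set since $R$ is local Noetherian), I get $F^{e_0}(N^i) = 0$ for all $i$. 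The subtlety is that a nilpotent $F$-stable \emph{subquotient} $M''/M'$ of $H^i_\mm(R)$ need not have its preimage contained in $N^i$; so I would instead argue that for any $F$-stable $M' \subseteq M'' \subseteq H^i_\mm(R)$ with $F$ nilpotent on $M''/M'$, one has $F^{e}(M'') \subseteq M'$ for $e$ uniform — this follows again from the Artinian/Noetherian property: the ascending chain $M' \subseteq M' + FM'' \subseteq M' + F^2 M'' \subseteq \cdots$ of $F$-stable submodules of $H^i_\mm(R)$ stabilizes, hence $F^{e}M'' \subseteq M'$ for $e \gg 0$, and by Noetherianity of the lattice of $F$-stable submodules of $H^i_\mm(R)$ (each is cofinitely generated / Matlis dual to a Noetherian module) a \emph{single} $e$ works for all such pairs simultaneously. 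Taking the max over the finitely many $i$ gives a uniform $e$.

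Next I would translate ``$F^e(M'') \subseteq M'$ for all nilpotent $F$-stable subquotients'' into the annihilator statement via Matlis duality. Dualizing, the $R$-linear map $\mathcal{F}^e_R(H^i_\mm(R)) \to H^i_\mm(R)$ becomes, after Matlis dualizing and using $F$-finiteness, the evaluation map $F^e_* R \otimes_R H^{-i}(\omega^\bullet_R) \to H^{-i}(\omega^\bullet_R)$ coming from the Cartier structure; equivalently, on the level of $H^i_\mm(R)$ the condition $F^e(M'') \subseteq M'$ for \emph{every} nilpotent subquotient means that the image of $F^e$ composed with any projection onto a nilpotent quotient vanishes, which forces the trace ideal $J_e := \sum_{\psi \in \Hom_R(F^e_*R, R)} \psi(F^e_*R)$ to annihilate each such subquotient. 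Concretely: if $x$ lies in a nilpotent $F$-stable subquotient and $r \in J_e$, write $r = \psi(F^e_* s)$; then $r x = \psi(F^e_* s) x$, and one uses that $F^e$ on the subquotient is zero together with the compatibility of $\psi$ with the Frobenius action on $H^i_\mm(R)$ to conclude $rx = 0$. This is exactly the mechanism by which trace ideals act as ``multipliers'' for Frobenius in the work of the second author and others, so I expect it to go through by unwinding the $R[F]$-module structure and the definition of the trace ideal.

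For the final ``in particular'': if $R$ is $F$-pure (equivalently $F$-split, since $R$ is $F$-finite), then the splitting $\phi \colon F_*R \to R$ with $\phi(F_*1) = 1$ shows $1 \in J_1 \subseteq J_e$ (trace ideals are nested appropriately, or one iterates the splitting to get a splitting of $F^e$), hence $J_e = R$. By the first part $R = J_e$ annihilates every nilpotent $F$-stable subquotient of $H^i_\mm(R)$, which forces every such subquotient to be zero; in particular for every $F$-stable submodule $N \subseteq H^i_\mm(R)$ the Frobenius action on $H^i_\mm(R)/N$ has zero nilpotent part, i.e.\ is injective. Thus $H^i_\mm(R)$ is anti-nilpotent for all $i$, which is the definition of $R$ being $F$-anti-nilpotent.

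\textbf{Main obstacle.} The genuinely delicate point is the \emph{uniformity} of the exponent $e$ across all nilpotent $F$-stable subquotients at once (not merely across the finitely many $i$): a priori different subquotients could require larger and larger powers of $F$. I expect this to be handled by the Noetherian property of the collection of $F$-stable submodules of an Artinian $R[F]$-module (via Matlis duality, these correspond to submodules of a Noetherian $R$-module carrying a Cartier structure, where such uniform bounds are standard, e.g.\ from the theory in \cite{LyubeznikFmodules} and the finiteness of associated primes of Cartier modules). Identifying the correct uniform statement and citing or re-deriving it cleanly is the crux; once that is in place, the Matlis-duality translation to the trace ideal and the $F$-pure specialization are formal.
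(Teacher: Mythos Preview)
Your overall architecture matches the paper's: obtain a uniform $e$ such that $F^e$ kills every nilpotent $F$-stable subquotient of $H^i_\mm(R)$, then use the compatibility of any $\psi\in\Hom_R(F^e_*R,R)$ with the Frobenius action on local cohomology to conclude that $\psi(F^e_*R)$ annihilates such subquotients. Two points, however, are either mis-argued or genuinely missing.

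\textbf{Uniformity of $e$.} Your chain $M'\subseteq M'+FM''\subseteq M'+F^2M''\subseteq\cdots$ is descending, not ascending, and its stabilization does not by itself give $F^eM''\subseteq M'$; nilpotence of $F$ on $M''/M'$ is the \emph{hypothesis}, so some $e$ exists for each pair trivially and only uniformity is at stake. Your appeal to ``Noetherianity of the lattice of $F$-stable submodules'' is not a statement with an obvious proof. The paper handles this cleanly via the Lyubeznik filtration \cite[Theorem~4.7]{LyubeznikFmodules}: fix $0=L_0\subseteq N_0\subseteq\cdots\subseteq L_n\subseteq N_n=H^i_\mm(R)$ with each $N_j/L_j$ nilpotent (killed by $F^{e_0}$) and each $L_j/N_{j-1}$ simple with nontrivial $F$; intersecting any nilpotent subquotient with this fixed filtration shows $F^{ne_0}$ kills it. This gives the explicit uniform $e=ne_0$ you were looking for.

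\textbf{The missing fullness reduction.} More seriously, your ``concrete'' computation that $rx=0$ for $r=\psi(F^e_*s)$ does not close. Unwinding the compatibility (\cite[Lemma~3.3]{Ma14}) one gets $rx=H^i_\mm(\psi)\bigl(F^e_*(sF^e(x))\bigr)$ with $F^e(x)\in L$; but there is no reason that $H^i_\mm(\psi)(F^e_*L)\subseteq L$ for an arbitrary $F$-stable $L$. The paper's fix is to first replace $L$ by $L':=\langle F^{e'}(L)\rangle$ for $e'\gg0$, which is \emph{full}. Then $N/L'$ is still nilpotent (so the same uniform $e$ applies), and fullness gives $F^e_*L'=\Image(F^e_*R\otimes L'\to F^e_*L')$, whence $H^i_\mm(\psi)(F^e_*L')=(\psi\otimes\mathrm{id})(F^e_*R\otimes L')=\psi(F^e_*R)\cdot L'\subseteq L'$. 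This closes the diagram chase and yields $\psi(F^e_*R)\cdot N\subseteq L'\subseteq L$. Your proposal omits this reduction, and without it the argument has a genuine gap. The $F$-pure consequence is fine once the first part is established.
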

\begin{proof}
First of all, we take a Lyubeznik filtration of $H_\mm^i(R)$ (see \cite[Theorem 4.7]{LyubeznikFmodules}):
$$0=L_0\subseteq N_0 \subseteq L_1 \subseteq N_1 \subseteq \cdots \subseteq L_n \subseteq N_n =H_{\mm}^i(R)$$
of $F$-stable submodules of $H_{\mm}^i(R)$ so that each $L_i/N_{i-1}$ is a simple $R[F]$-module with nontrivial Frobenius action and each $N_i/L_i$ is an $R[F]$-module with nilpotent Frobenius action. There exists $e_0$ so that $F^{e_0}(N_i/L_i)=0$ for all $i$. Then, it follows by an easy filtration argument that for any $F$-stable subquotient $N/L$ of $H_{\mm}^i(R)$ that is nilpotent, $F^{ne_0}(N/L)=0$. Set $e=ne_0$. For any $\phi\in \Hom_R(F^e_*R, R)$, we need to show that $\phi(F^e_*R)$ annihilates $N/L$. We may replace $L$ by $L':=\langle F^{e'}(L)\rangle$ for $e'\gg0$: the Frobenius action on $L/L'$ is nilpotent and thus so is $N/L'$, and clearly if $\phi(F^e_*R)$ annihilates $N/L'$ then it also annihilates $N/L$. After this replacement, we have that $L$ is full. We now consider the following commutative diagram: 
\vspace{0.4em}
\[
\xymatrix{
H_{\mm}^i(R) \ar[r] & F^e_*R \otimes H_{\mm}^i(R) \ar@/^2pc/[rr]^{\phi\otimes\text{id}} \ar[r] & H_{\mm}^i(F^e_*R) \ar[r]^{H_{\mm}^i(\phi)}  & H_{\mm}^i(R) \\
N \ar[r] \ar@{^{(}->}[u]  & F^e_*R \otimes N \ar[r] \ar[u]   & F^e_*N \ar@{^{(}->}[u]\\
L \ar[r] \ar@{^{(}->}[u] & F^e_*R \otimes L \ar@{->>}[r] \ar[u] & F^e_*L  \ar@{^{(}->}[u]
}\]
where the top part is commutative by \cite[Lemma 3.3]{Ma14} (see also \cite[Lemma 1.23 on page 493]{SchwedeSmithBook}), and the bottom right surjectivity follows from the fact that $L$ is full. By our choice of $e$, we know that 
$$\Image(F^e_*R \otimes N\to F^e_*N) \subseteq F^e_*L = \Image(F^e_*R \otimes L\to F^e_*L).$$ 
Chasing the diagram, we found that 
$$\Image(F^e_*R \otimes N \to H_{\mm}^i(R)) \subseteq \Image(F^e_*R\otimes L \to H_{\mm}^i(R)).$$
Thus by the commutative diagram (use the arrow $\phi\otimes \text{id}$), we have that 
$$\phi(F^e_*R)N\subseteq L$$
as wanted.
\end{proof}

We next recall that for an $F$-finite ring $R$, an ideal $I\subseteq R$ is said {\it uniformly compatible} if $\phi(I)\subseteq I$ for all $\phi\in \Hom_R(F_*R,R)$.

\begin{proposition}
\label{prop: F-full submodules}
Let $(R,\mm)$ be an $F$-finite local ring of prime characteristic $p$ and $N\subseteq H_{\mm}^i(R)$ be an $F$-stable submodule that is full. Then for any $F$-stable submodule $L$ that contains $N$, $J:=\Ann_R(L/N)$ is a uniformly compatible ideal. 

In particular, the annihilator of any $F$-stable submodule $L\subseteq H_{\mm}^i(R)$ is a uniformly compatible ideal.
\end{proposition}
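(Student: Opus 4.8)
The plan is to run a short diagram chase with the commutative diagram of \cite[Lemma 3.3]{Ma14} that already appears in the proof of Theorem~\ref{t:F-pure->anti}. Fix $\phi\in\Hom_R(F_*R,R)$ and $c\in J=\Ann_R(L/N)=(N:_R L)$; we must show $\phi(F_*c)\in J$, that is, $\phi(F_*c)\,L\subseteq N$. Write $\nu: F_*R\otimes_R H^i_\mm(R)\to H^i_\mm(F_*R)$ for the natural map, $F$ for the Frobenius action on $H^i_\mm(R)$, and $F_*N$ for the image of the submodule $N$ under $H^i_\mm(R)\to H^i_\mm(F_*R)\cong F_*H^i_\mm(R)$. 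The two facts I will use, both encoded in that diagram, are: on elementary tensors $\nu\bigl((F_*r)\otimes m\bigr)=F_*\bigl(rF(m)\bigr)$, and $H^i_\mm(\phi)\circ\nu=\phi\otimes\mathrm{id}$, where $(\phi\otimes\mathrm{id})\bigl((F_*r)\otimes m\bigr)=\phi(F_*r)\,m$.

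The key point is a single consequence of the fullness of $N$: that $H^i_\mm(\phi)(F_*N)\subseteq N$. To prove it I take $n\in N$ and use fullness of $N$ to write $n=\sum_j r_jF(n_j)$ with $r_j\in R$ and $n_j\in N$, so that $F_*n=\sum_j\nu\bigl((F_*r_j)\otimes n_j\bigr)$, and hence $H^i_\mm(\phi)(F_*n)=\sum_j(\phi\otimes\mathrm{id})\bigl((F_*r_j)\otimes n_j\bigr)=\sum_j\phi(F_*r_j)\,n_j\in N$ since $N$ is an $R$-submodule. Granting this, I fix $\xi\in L$: because $L$ is $F$-stable, $F(\xi)\in L$, and because $cL\subseteq N$ this gives $cF(\xi)\in N$, so $\nu\bigl((F_*c)\otimes\xi\bigr)=F_*\bigl(cF(\xi)\bigr)\in F_*N$. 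Consequently
\[
\phi(F_*c)\,\xi=(\phi\otimes\mathrm{id})\bigl((F_*c)\otimes\xi\bigr)=H^i_\mm(\phi)\bigl(\nu((F_*c)\otimes\xi)\bigr)\in H^i_\mm(\phi)(F_*N)\subseteq N.
\]
As $\xi$ was arbitrary this yields $\phi(F_*c)\,L\subseteq N$, i.e. $\phi(F_*c)\in J$; since $\phi$ and $c$ were arbitrary, $J$ is uniformly compatible. The final assertion then follows by taking $N=0$, which is (vacuously) a full $F$-stable submodule, so that $\Ann_R(L)=\Ann_R(L/0)$ is uniformly compatible.

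I do not anticipate a genuine obstacle here: the argument is essentially this brief. The only thing needing a little care is the bookkeeping with the restriction-of-scalars module $F_*R$ — chiefly the identity $\nu\bigl((F_*r)\otimes m\bigr)=F_*(rF(m))$, which is exactly the computation behind \cite[Lemma 3.3]{Ma14} and so can be cited rather than redone — together with the trivial verifications that the zero module is full and that fullness of $N$ is used only through surjectivity of the structure map $F_*R\otimes_R N\to N$. The same chase works verbatim with $F^e$ in place of $F$ for every $e>0$.
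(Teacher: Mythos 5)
Your proof is correct and follows essentially the same route as the paper: you invoke the commutative diagram from \cite[Lemma 3.3]{Ma14}, use fullness of $N$ precisely to obtain $H^i_\mm(\phi)(F_*N)\subseteq N$, and then chase a class of $L$ through the diagram. The only difference is that you unwind the diagram chase element by element rather than at the level of images of submodules, which is a matter of exposition, not substance.
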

\begin{proof}
Let $\phi\in \Hom_R(F_*R, R)$. We consider the following commutative diagram 
\vspace{0.4em}
\[
\xymatrix{
H_{\mm}^i(R) \ar[r] & F_*R \otimes H_{\mm}^i(R) \ar@/^2pc/[rr]^{\phi\otimes\text{id}} \ar[r] & H_{\mm}^i(F_*R) \ar[r]^{H_{\mm}^i(\phi)}  & H_{\mm}^i(R) \\
L \ar[r] \ar@{^{(}->}[u]  & F_*R \otimes L \ar[r] \ar[u]   & F_*L \ar@{^{(}->}[u]\\
N \ar[r] \ar@{^{(}->}[u] & F_*R \otimes N  \ar@{->>}[r] \ar[u] & F_*N  \ar@{^{(}->}[u]
}\]
where the top part is commutative by \cite[Lemma 3.3]{Ma14} (see also \cite[Lemma 1.23 on page 493]{SchwedeSmithBook}), and the bottom right surjectivity follows from the hypothesis that $N$ is full. 

We need to show $\phi(F_*J)\subseteq J$. Since $J=\Ann_R(L/N)$, it suffices to show that $\phi(F_*J)L\subseteq N$. But by the commutative diagram above, we have 
$$\phi(F_*J)L\subseteq H_{\mm}^i(\phi)(F_*J\cdot F_*L)\subseteq H_{\mm}^i(\phi)(F_*N)=H_{\mm}^i(\phi)(\Image(F_*R \otimes N))\subseteq \phi(F_*R)N\subseteq N.$$
Hence the first statement follows. The second statement follows from the first by applying it to the full submodule $N=0$.
\end{proof}

\begin{proposition}
\label{p:radcomp}
Let $(R,\mm)$ be an $F$-finite local ring of prime characteristic $p$. If the annihilator $J$ of an $F$-stable subquotient of $H_{\mm}^i(R)$ is radical (e.g., if the Frobenius action on that subquotient is injective), then $J$ is a uniformly compatible ideal.

In particular, if $R$ is $F$-anti-nilpotent, then the annihilator of any $F$-stable subquotient of $H_{\mm}^i(R)$ is uniformly compatible.
\end{proposition}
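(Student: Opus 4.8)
The plan is to reduce the radical case to the previous proposition (Proposition~\ref{prop: F-full submodules}) by carefully rewriting the given subquotient as a quotient $L/N$ in which the "denominator" $N$ is $F$-stable and full. So suppose $L/N$ is an $F$-stable subquotient of $H_{\mm}^i(R)$, where $N \subseteq L \subseteq H_{\mm}^i(R)$ are $F$-stable submodules, and suppose $J := \Ann_R(L/N)$ is radical. The first step is to replace $N$ by the larger submodule $N' := \langle F^{e}(N) \rangle \subseteq N$ for $e \gg 0$ — wait, this goes the wrong direction; instead I would pass to $N' := L \cap \bigcup_e (N :_{L} \mm^{[p^e]}\text{-stuff})$... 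Let me reorganize: the clean move is to enlarge $N$ inside $L$ to the submodule $\widetilde{N}$ of all elements of $L$ that map to a nilpotent element of $L/N$ under the Frobenius action, i.e. $\widetilde{N}/N$ is the nilpotent part of the Frobenius action on $L/N$. Then $\widetilde{N}$ is $F$-stable, and the induced Frobenius on $L/\widetilde{N}$ is injective. The key point is that $\Ann_R(L/\widetilde N) = \Ann_R(L/N)$ when the latter is radical: indeed $N \subseteq \widetilde{N}$ gives $\Ann(L/N) \subseteq \Ann(L/\widetilde N)$, and conversely any $r \in \Ann(L/\widetilde N)$ satisfies $rL \subseteq \widetilde N$, so $r^{p^e}L \subseteq F^e(\text{stuff})$ lands in $N$ up to $R$-span for $e \gg 0$ by the defining property of $\widetilde N$ (here I use that $\widetilde N/N$ is killed by a fixed power of $F$), whence $r^{p^e} \in \Ann(L/N) = J$, and since $J$ is radical, $r \in J$.

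The second step is to show $L/\widetilde N$, having injective Frobenius, can be written with a full denominator. Here I would like to say that $\widetilde N$ itself is full; this should follow because the Frobenius action on $L/\widetilde N$ is injective, so by the argument in the proof of Proposition~\ref{prop graded Lyubeznik filtration} (or the fullness criterion, \cite[Lemma 2.2]{MaQuyDeformation}) injectivity propagates to fullness of the submodule — more precisely, $\widetilde N$ is the preimage in $L$ of the maximal nilpotent $R[F]$-submodule of $L/N$, hence $\widetilde N = \langle F^e(L) \rangle + N$-type construction makes $\widetilde{N}/(\text{full part})$ nilpotent, and one checks $\widetilde N$ is full by comparing with $\langle F^e(\widetilde N)\rangle$. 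Once $\widetilde N$ is full, Proposition~\ref{prop: F-full submodules} applied to the pair $\widetilde N \subseteq L$ gives that $\Ann_R(L/\widetilde N)$ is uniformly compatible, and by Step~1 this equals $J$. For the parenthetical remark: if the Frobenius on $L/N$ is already injective then $\widetilde N = N$ and the argument simplifies, but actually then one still must verify $N$ is full, which again follows from injectivity.

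For the "in particular" statement, assume $R$ is $F$-anti-nilpotent and let $L/N$ be any $F$-stable subquotient of $H_{\mm}^i(R)$. By definition of $F$-anti-nilpotency, $H_{\mm}^i(R)$ is an anti-nilpotent $R[F]$-module, so every $R[F]$-submodule is full and, crucially, the induced Frobenius on any $R[F]$-quotient of any $R[F]$-submodule is injective; in particular $F$ acts injectively on $L/N$. Then $\Ann_R(L/N)$ is radical (a module with injective Frobenius has radical annihilator: if $rx = 0$ for... rather, if $r \in \sqrt{\Ann(L/N)}$ then $r^{p^e} L \subseteq N$, so $F^e(rL) \subseteq$ span of $N$, and injectivity of $F^e$ on $L/N$ forces $rL \subseteq N$), so the first part applies and gives uniform compatibility.

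The main obstacle I anticipate is Step~2: verifying that the enlarged denominator $\widetilde N$ is genuinely full, and that the annihilator is unchanged under the enlargement. The fullness is a standard-but-delicate manipulation with the Frobenius functor and the "taking $F^e$-span" operation (exactly the kind of argument appearing in the proof of Theorem~\ref{t:F-pure->anti}, where one replaces $L$ by $\langle F^{e'}(L)\rangle$ to achieve fullness), and I would model the bookkeeping on that proof; the annihilator comparison is where radicality of $J$ is used in an essential way and must be stated carefully.
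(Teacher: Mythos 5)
Your reduction to Proposition~\ref{prop: F-full submodules} does not work, and the gap is not a delicate bookkeeping issue but a structural one. You propose to enlarge $N$ to $\widetilde N$, the preimage in $L$ of the maximal nilpotent $R[F]$-submodule of $L/N$, and to argue that $\Ann_R(L/\widetilde N)=J$ when $J$ is radical. This is false. Take $R=\FF_2[[x,y]]/(x^2,xy)$, $L=H^0_\mm(R)=(x)$, $N=0$. Then $F(L)=0$, so $\widetilde N=L$ and $\Ann_R(L/\widetilde N)=R$, while $J=\Ann_R(L)=(x,y)$ is a (radical, and in fact uniformly compatible) proper ideal. The mechanism behind your argument reveals the problem: from $rL\subseteq\widetilde N$ and $F^{e_0}(\widetilde N)\subseteq N$ you only get $r^{p^{e_0}}F^{e_0}(L)\subseteq N$, and one cannot pass from this to $r^{p^{e_0}}L\subseteq N$ unless $L$ is full, i.e.\ $\langle F^{e_0}(L)\rangle=L$ --- which you cannot assume, and which fails in the example. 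Step~2 has the same defect from the other side: $\widetilde N$ need not be full (e.g.\ if $N$ is nilpotent and nonzero, then $\widetilde N/N$ nilpotent forces $\widetilde N$ nilpotent, hence not full), so Proposition~\ref{prop: F-full submodules} is not applicable to the pair $\widetilde N\subseteq L$. Your treatment of the ``in particular'' statement (anti-nilpotency forces injective Frobenius on every subquotient, which forces the annihilator to be radical) is correct and agrees with the paper.

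The paper's actual argument takes a different route that sidesteps the need to make the denominator full globally. It argues by contradiction: if $J$ is radical but not uniformly compatible, pick $\phi\in\Hom_R(F^e_*R,R)$ with $\phi(F^e_*J)\not\subseteq J$ and a minimal prime $P$ of $J$ with $\phi(F^e_*J)\not\subseteq P$. Localizing at $P$ gives $\phi(F^e_*J_P)=R_P$, so $R_P$ is $F$-pure and hence $F$-anti-nilpotent. One then transports the $F$-stable subquotient of $H^i_\mm(R)$ through Matlis duality to a Cartier subquotient of $H^{-i}(\omega_R^\bullet)$, localizes, and dualizes back to get an $F$-stable subquotient of local cohomology over $R_P$ with annihilator $J_P$; since $R_P$ is $F$-anti-nilpotent, every $F$-stable submodule there is full, so Proposition~\ref{prop: F-full submodules} applies \emph{over $R_P$} and shows $J_P$ is uniformly compatible, contradicting $\phi(F^e_*J_P)=R_P$. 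Radicality of $J$ is what makes the localization at a minimal prime meaningful; it is not used to compare annihilators as in your draft. If you want to rescue your approach, you would need to first localize at a minimal prime of $J$ where things are assumed to go wrong, because it is only after that step that the fullness you need becomes available.
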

\begin{proof}
Let $L/N$ be an $F$-stable subquotient of $H_\mm^i(R)$ so that $J:=\Ann(L/N)$ is radical. Suppose $J$ is not uniformly compatible. Then there is $\phi\in \Hom_R(F^e_*R, R)$ so that $\phi(F^e_*J)\nsubseteq J$. Since $J$ is radical, there exists a minimal prime $P$ of $J$ so that $\phi(F^e_*J)\nsubseteq P$. Thus after localizing at $P$, $\phi(F^e_*J_P)=R_P$. It follows that $R_P$ is F-pure and hence F-anti-nilpotent. 

The inclusion of $F$-stable submodules $N\subseteq L \subseteq H_\mm^i(R)$ yields an inclusion of Cartier submodules $C\subseteq D \subseteq H^{-i}(\omega_R^\bullet)$, with $\Ann_R(D/C)=J$. After localizing at $P$, we obtain an inclusion of Cartier submodules $C_P\subseteq D_P\subseteq H^{-j}(\omega_{R_P}^\bullet)$ with $J_P=\Ann_{R_P}(D_P/C_P)$, which in turn yields an inclusion of $F$-stable submodules $N'\subseteq L' \subseteq H_P^j(R_P)$ with $\Ann_{R_P}(L'/N')=J_P$. Since $R_P$ is $F$-anti-nilpotent, every $F$-stable submodule of $H_P^j(R_P)$ is full by \cite[Lemma 2.1]{MaQuyDeformation} and thus by Proposition~\ref{prop: F-full submodules}, $J_P$ is uniformly compatible in $R_P$, which contradicts $\phi(F^e_*J_P)=R_P$.
\end{proof}

Based on Proposition~\ref{prop: F-full submodules} and Proposition~\ref{p:radcomp}, one might ask whether the annihilator of every $F$-stable subquotient of $H_\mm^i(R)$ is uniformly compatible. We include a simple example indicating that this is not the case.

\begin{example}
Let $R=S/I$ with $S=\FF_2[[x,y]]$ and $I=(x,y)^2$, and let $\mm=(x,y)$. If we set $N=(x)$, then $N$ is an $F$-stable submodule of $L=R = H^0_\mm(R)$, and 
$$\Ann_R(L/N) = \Ann_R(R/(x)) = (x).$$ 
Now for $q=2^e$, we have $f_e:=x^{q-2}y^{2q-1} \in I^{[q]}:_SI$. Let ${\rm Tr}:F_*(S)\to S$ denote the trace map. We consider $\phi_e(-) = {\rm Tr}^e(f_e \cdot -):F^e_*(R) \to R$ and it is easy to see that 
$$\phi_e(x) = {\rm Tr}^e(x^{q-1}y^{2q-1}) = y{\rm Tr}^e(x^{q-1}y^{q-1}) = y \notin (x),$$ 
thus $(x)$ is not uniformly compatible.
\end{example}

\bibliographystyle{acm}
\bibliography{References}

\begin{thebibliography}{10}

\bibitem{BrionKumarFsplittingBook}
{\sc Brion, M., and Kumar, S.}
\newblock {\em Frobenius splitting methods in geometry and representation
  theory}, vol.~231 of {\em Progress in Mathematics}.
\newblock Birkh\"{a}user Boston, Inc., Boston, MA, 2005.

\bibitem{BH1993}
{\sc Bruns, W., and Herzog, J.}
\newblock {\em Cohen-{M}acaulay rings}, vol.~39 of {\em Cambridge Studies in
  Advanced Mathematics}.
\newblock Cambridge University Press, Cambridge, 1993.

\bibitem{CoVa}
{\sc Conca, A., and Varbaro, M.}
\newblock Square-free {G}r\"obner degenerations.
\newblock {\em Invent. Math. 221}, 3 (2020), 713--730.

\bibitem{CDV}
{\sc Constantinescu, A., De~Negri, E., and Varbaro, M.}
\newblock Singularities and radical initial ideals.
\newblock {\em Bull. Lond. Math. Soc. 52}, 4 (2020), 674--686.

\bibitem{DaoMaVarbaro}
{\sc Dao, H., Ma, L., and Varbaro, M.}
\newblock Regularity, singularities and {$h$}-vector of graded algebras.
\newblock {\em Trans. Amer. Math. Soc. 377}, 3 (2024), 2149--2167.

\bibitem{DaMuF-inj}
{\sc Datta, R., and Murayama, T.}
\newblock Permanence properties of {$F$}-injectivity.
\newblock {\em Math. Res. Lett. 31}, 4 (2024), 985--1027.

\bibitem{DSGNB}
{\sc De~Stefani, A., Grifo, E., and N\'u\~nez Betancourt, L.}
\newblock Local cohomology and {L}yubeznik numbers of {$F$}-pure rings.
\newblock {\em J. Algebra 571\/} (2021), 316--338.

\bibitem{EisenbudBook}
{\sc Eisenbud, D.}
\newblock {\em Commutative algebra}, vol.~150 of {\em Graduate Texts in
  Mathematics}.
\newblock Springer-Verlag, New York, 1995.
\newblock With a view toward algebraic geometry.

\bibitem{ElkiesSupersingular}
{\sc Elkies, N.~D.}
\newblock The existence of infinitely many supersingular primes for every
  elliptic curve over {${\bf Q}$}.
\newblock {\em Invent. Math. 89}, 3 (1987), 561--567.

\bibitem{Fed83}
{\sc Fedder, R.}
\newblock {$F$}-purity and rational singularity.
\newblock {\em Trans. Amer. Math. Soc. 278}, 2 (1983), 461--480.

\bibitem{G-M}
{\sc Gonz\'alez-Mart\'inez, R.}
\newblock Gorenstein binomial edge ideals.
\newblock {\em Math. Nachr. 294}, 10 (2021), 1889--1898.

\bibitem{CayleySalmon}
{\sc Hahn, M.~A., Lamboglia, S., and Vargas, A.}
\newblock A short note on {C}ayley-{S}almon equations.
\newblock {\em Matematiche (Catania) 75}, 2 (2020), 559--574.

\bibitem{HaraRationalSingularities}
{\sc Hara, N.}
\newblock A characterization of rational singularities in terms of injectivity
  of {F}robenius maps.
\newblock {\em Amer. J. Math. 120}, 5 (1998), 981--996.

\bibitem{HTVW}
{\sc {Huang}, H., {Tarasova}, Y., {Varbaro}, M., and {Witt}, E.}
\newblock {Smooth Herzog projective curves}.
\newblock {\em arXiv e-prints\/} (Nov. 2025), arXiv:2512.00584.

\bibitem{KoVa}
{\sc Koley, M., and Varbaro, M.}
\newblock Gr\"{o}bner deformations and {$F$}-singularities.
\newblock {\em Math. Nachr. 296}, 7 (2023), 2903--2917.

\bibitem{KollarMori}
{\sc Koll\'{a}r, J., and Mori, S.}
\newblock {\em Birational geometry of algebraic varieties}, vol.~134 of {\em
  Cambridge Tracts in Mathematics}.
\newblock Cambridge University Press, Cambridge, 1998.
\newblock With the collaboration of C. H. Clemens and A. Corti, Translated from
  the 1998 Japanese original.

\bibitem{KSSW}
{\sc Kurano, K., Sato, E.-i., Singh, A.~K., and Watanabe, K.-i.}
\newblock Multigraded rings, diagonal subalgebras, and rational singularities.
\newblock {\em J. Algebra 322}, 9 (2009), 3248--3267.

\bibitem{LyubeznikFmodules}
{\sc Lyubeznik, G.}
\newblock {$F$}-modules: applications to local cohomology and {$D$}-modules in
  characteristic {$p>0$}.
\newblock {\em J. Reine Angew. Math. 491\/} (1997), 65--130.

\bibitem{LSW}
{\sc Lyubeznik, G., Singh, A.~K., and Walther, U.}
\newblock Local cohomology modules supported at determinantal ideals.
\newblock {\em J. Eur. Math. Soc. (JEMS) 18}, 11 (2016), 2545--2578.

\bibitem{Ma14}
{\sc Ma, L.}
\newblock Finiteness properties of local cohomology for {$F$}-pure local rings.
\newblock {\em Int. Math. Res. Not. IMRN}, 20 (2014), 5489--5509.

\bibitem{MaPolstraFsingularitesBook}
{\sc Ma, L., and Polstra, T.}
\newblock F-singularities: a commutative algebra approach.
\newblock availabel at
  https://www.math.purdue.edu/~ma326/F-singularitiesBook.pdf.

\bibitem{MaQuyDeformation}
{\sc Ma, L., and Quy, P.~H.}
\newblock Frobenius actions on local cohomology modules and deformation.
\newblock {\em Nagoya Math. J. 232\/} (2018), 55--75.

\bibitem{MaSchwedeShimomoto}
{\sc Ma, L., Schwede, K., and Shimomoto, K.}
\newblock Local cohomology of {D}u {B}ois singularities and applications to
  families.
\newblock {\em Compos. Math. 153}, 10 (2017), 2147--2170.

\bibitem{QuyShimomoto}
{\sc Quy, P.~H., and Shimomoto, K.}
\newblock {$F$}-injectivity and {F}robenius closure of ideals in {N}oetherian
  rings of characteristic {$p>0$}.
\newblock {\em Adv. Math. 313\/} (2017), 127--166.

\bibitem{SakamakiCubicSurfaces}
{\sc Sakamaki, Y.}
\newblock Automorphism groups on normal singular cubic surfaces with no
  parameters.
\newblock {\em Trans. Amer. Math. Soc. 362}, 5 (2010), 2641--2666.

\bibitem{SchwedeSmithBook}
{\sc Schwede, K., and Smith, K.}
\newblock {\em Singularities defined by the {F}robenius map}.
\newblock 2024.

\bibitem{Anurag99}
{\sc Singh, A.~K.}
\newblock Deformation of {$F$}-purity and {$F$}-regularity.
\newblock {\em J. Pure Appl. Algebra 140}, 2 (1999), 137--148.

\bibitem{WatanabeRationalSingularities}
{\sc Watanabe, K.}
\newblock Rational singularities with {$k^{\ast} $}-action.
\newblock In {\em Commutative algebra ({T}rento, 1981)}, Heidelberger
  Taschenb\"{u}cher. Dekker, New York, 1983, pp.~339--351.

\end{thebibliography}

\end{document}